\newtheorem{theorem}{Theorem}
\numberwithin{theorem}{section}
\newtheorem{lemma}[theorem]{Lemma}
\newtheorem{proposition}[theorem]{Proposition}
\newtheorem{corollary}[theorem]{Corollary}
\theoremstyle{remark}
\newtheorem{rmk}{Remark}
\newtheorem{question}{Question}
\newcommand{\D}{\partial}
\newcommand{\Z}{\mathbb{Z}}
\newcommand{\R}{\mathbb{R}}
\newcommand{\B}{\mathcal{B}}
\newcommand{\E}{\mathbb{E}}
\newcommand{\F}{f\!}
\newcommand{\N}{\mathcal{N}}
\newcommand{\G}{\mathcal{G}}
\newcommand{\Li}{\mathrm{Li}}
\renewcommand{\H}{\mathbb{H}}
\renewcommand{\Re}{\mathrm{Re}}
\renewcommand{\Im}{\mathrm{Im}}
\renewcommand{\arg}{\mathrm{Arg}}
\newcommand{\eps}{\varepsilon}
\newcommand{\be}{\begin{equation}}
\newcommand{\ee}{\end{equation}}
\newcommand{\old}[1]{}
\newcommand{\X}{{\mathcal X}}
\newcommand{\Y}{{\mathcal Y}}
\newcommand{\td}[1]{{\bf\color{blue} #1}}
\begin{document}
\title{The genus-zero five-vertex model}
\author{Richard Kenyon\thanks{Department of Mathematics, Yale University, New Haven CT 06520; richard.kenyon at yale.edu. Research supported by NSF DMS-1854272, DMS-1939926 and the Simons Foundation grant 327929.} \and Istv\'an Prause\thanks{
Department of Physics and Mathematics, University of Eastern Finland, P.O. Box 111, 80101 Joensuu, Finland; istvan.prause at uef.fi.
}}

\date{}
\maketitle
\begin{abstract}
We study the free energy and limit shape problem for the five-vertex model with periodic ``genus zero" weights. We derive the exact phase diagram, free energy and surface tension for this model. We show that its surface tension has trivial potential and use this to give explicit parameterizations of limit shapes.
\end{abstract}

\section{Introduction}
{\let\thefootnote\relax\footnote{{\bf keywords:} square ice model, vertex model, Bethe Ansatz.
\noindent{\bf 2020 MSC:} 82B20}}

A \emph{monotone nonintersecting lattice path configuration} (MNLP configuration) 
on $\Z^2$ is a collection of vertex disjoint,
North- and West-going nearest-neighbor paths in the square grid; see Figure \ref{heights}.
An MNLP configuration has an associated $\Z$-valued \emph{height function} on faces of the graph,
well defined up to an additive constant, as seen in that figure. 
\begin{figure}[tp]
\centerline{\includegraphics[width=1.5in]{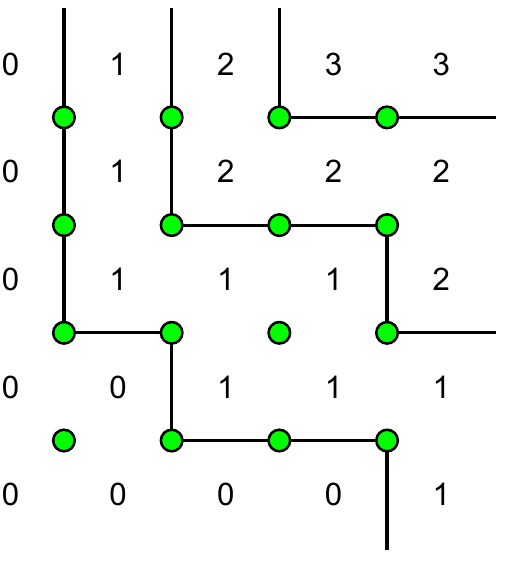}}
\caption{\label{heights}An MNLP configuration and associated height function.
The height function is constant on complementary regions of the paths and increases by $1$ 
when crossing a path upwards or to the right.}
\end{figure}

The \emph{five vertex model} is a probability measure on MNLP configurations (on 
a finite subgraph of $\Z^2$), 
where the probability of a configuration
is proportional to the product of its \emph{vertex weights} as shown in Figure \ref{vtxwts}.
\begin{figure}[htbp]
\centerline{\includegraphics[width=3in]{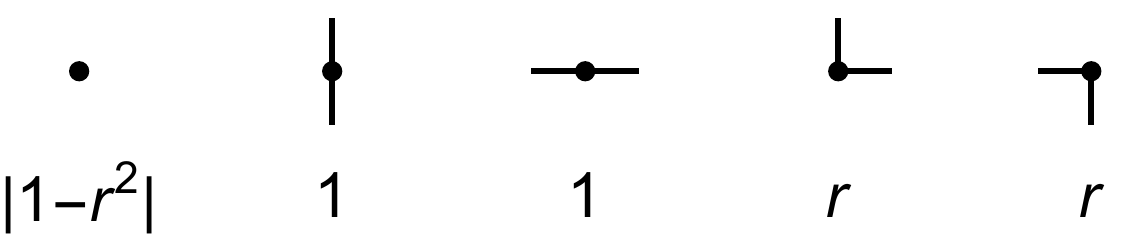}}
\caption{\label{vtxwts} Five-vertex Boltzmann weights. Here $r>0,$ $r \neq 1$ is a parameter of the model.}
\end{figure}

The five vertex model is a special case of the well-known six-vertex model introduced by Pauling \cite{Pauling} and first studied by Lieb \cite{lieb1967exact} in a symmetric case; 
see Nolden \cite{nolden1992asymmetric} for a discussion of the general case.
The five vertex model was studied by Noh and Kim \cite{NohKim}. Recently de Gier, Kenyon and Watson \cite{dGKW} gave a rather complete
study of the five vertex model, including the explicit phase diagram, free energy, surface tension, and limit shapes. 

In this paper we study a generalization of the five-vertex model, defined as above but where the 
quantity $r$ defining the vertex weights can vary from vertex to vertex. 
More specifically, we study the case of a certain subfamily of 
staggered weights (by which we mean vertex weights periodic under a sublattice 
$m_1\Z\oplus m_2\Z$)
which we call the \emph{genus-zero five vertex model}. This nomenclature refers to the fact
that the underlying ``amoeba" is simply connected, see below.
We extend the results of \cite{dGKW} to this genus-zero case. In particular we find explicit expressions
for the phase diagram, free energy, surface tension and limit shapes for Dirichlet boundary conditions.
The novelty here is that these models are
quite difficult to work with directly via the Bethe Ansatz, but we can use certain features
such as \emph{trivial potential} (see \cite{KP1} and below) to give a fairly complete limit shape theory. 
We moreover get a richer family of limit shapes than those for the simply-periodic model.

Our weights are not the most general periodic weights one can impose on the five-vertex model.
Indeed, the fully asymmetric six-vertex model of \cite{nolden1992asymmetric}
can be modeled as a staggered-weight five-vertex model with weights
periodic under $2\Z\times 2\Z$, and we can't handle this case at present. 
It remains to be seen what is the most general subvariety of periodic
weights for the five-vertex model to which our methods apply, see Section \ref{openq}.

\paragraph{The genus-zero five-vertex model.}
Let $\{\alpha_i\}_{i\in\Z}$ and $\{\beta_j\}_{j\in\Z}$ be sequences of positive reals, 
periodic with periods $m_1$ and $m_2$, respectively,
that is $\alpha_{i+m_1}=\alpha_i$ and $\beta_{j+m_2}=\beta_j$ for all $i,j$. We also assume either 
$\alpha_i\beta_j<1$
for all $i,j$ \emph{or} $\alpha_i\beta_j>1$ for all $i,j$. Since the model only depends on the products $\alpha_i\beta_j$,
without loss of generality we can scale all $\alpha$s by a constant and $\beta$s by the inverse constant so that either all $\alpha_i,\beta_j<1$ or all  
$\alpha_i,\beta_j>1$.
We consider the five-vertex model in which at vertex $v=(x,y)$ we have $r_v = \alpha_x\beta_y$, where $r_v$ is the parameter $r$ of Figure \ref{vtxwts}. 
This is a generalization of the simply periodic 
$m_1=m_2=1$ case considered in \cite{dGKW}.  We call this family of weights \emph{genus zero} weights,
and the corresponding measure the \emph{genus-zero five-vertex model}. We refer to the $r_v<1$ case
as the \emph{small $r$ case} and the $r_v>1$ case as the \emph{large $r$ case}.

For a probability measure $\mu$ on MNLP configurations of $\Z^2$, invariant under translations in 
$m_1\Z\oplus m_2\Z$,
let $s = \frac1{m_1}\E[h(m_1,0)-h(0,0)]$ be the expected horizontal height change for one lattice step, and $t=\frac1{m_2}\E[h(0,m_2)-h(0,0)]$ be the expected vertical height change. 
Then $(s,t)$ is said to be the \emph{slope} of $\mu$. The constraints of monotonicity
and disjointness on the lattice paths imply that the
parameters $(s,t)$ range over the triangle $\N\subset\R^2$ with vertices
$\{(0,0),(1,0),(0,1)\}$, which we refer to as the \emph{slope polygon}.

We show that the weights $\{\alpha_i\},\{\beta_j\}$ define for each $(s,t)\in\N$ a
possibly non-unique translation-invariant Gibbs measure
$\mu=\mu_{s,t}$ on MNLP configurations on $\Z^2$.
We compute the associated free energy $F(X,Y)$ and surface tension $\sigma(s,t)$ of this family of measures (see definitions below).
For the ``pure'' phases---the domain $\N'\subset \N$ where $\sigma$ is strictly convex (which is all of $\N$ in the large-$r$ case 
and a strict subset of $\N$ in the small-$r$ case)
we show that $\sigma$ has \emph{trivial potential}, as defined in \cite{KP1}.
As a consequence we can give explicit parameterizations
of \emph{limit shapes} (see Section \ref{limitshapesection}) in terms of analytic functions. In other words
the model is \emph{Darboux integrable} in the sense of \cite{BGH}.

There are two key novelties in the paper: the first is allowing for higher periods. Higher periods in the model give rise to richer phase diagrams and more intricate arctic boundaries. As far as we know, this is the first paper where the exact surface tension and explicit parameterisations of limit shapes are derived for a non-determinantal model allowing for arbitrarily high periods. To accomplish these tasks the essential idea is to work with the correct conformal coordinates (denoted by $z,w$ and $u$ below). The second novelty is the identification of the trivial potential property for five-vertex type interactions in terms of these coordinates. This property greatly simplifies the analysis of limit shapes even in the simply-periodic case, and we expect it to arise in further models as well.
\bigskip

\noindent{\bf Acknowledgements.} We thank Amol Aggarwal, 
Jan de Gier, Vadim Gorin, Andrei Okounkov, and Nicolai Reshetikhin for conversations related to this project. We also 
thank the referees
for comments and corrections.

\section{Background}

\subsection{Measures} 

Let $N$ be a multiple of both $m_1$ and $m_2$ and 
let $\G_N = \Z^2/N\Z^2$ be the $N\times N$ grid on a torus.
Let $\Omega_N$ be the set of MNLP configurations on $\G_N$.

For a configuration $m\in\Omega_N$ let $H_x$ and $H_y$ be the total height change going horizontally
(resp. vertically) around $\G_N$. Let $\Omega_N(H_x,H_y)$ be the subset of $\Omega$ consisting of configurations with these
height changes. Vertex weights of Figure \ref{vtxwts} (with $r=r_v$) define a probability measure $\mu_N(H_x,H_y)$ on $\Omega_N(H_x,H_y)$, giving a configuration a probability proportional to the product of its vertex weights.
For $(s,t)\in \N$ let $\mu_{s,t}$ be any subsequential weak limit measure
\be\label{must}\mu_{s,t}= \lim_{N\to\infty}\mu_N([Ns],[Nt]).\ee
Existence of a unique limit is still open in general, but not essential for the calculations in this paper.  Aggarwal \cite{Aggarwal} showed (in the context of the six-vertex model)
that in certain cases, which we call ``coexistence" cases below, there is no ergodic limit.
He also showed, however, that on the coexistence phase boundary, that is, 
when $(s,t)$ is on the upper right boundary of $\N'$ (see Figure \ref{neutral2X2}),
there is a unique limit measure $\mu_{s,t}$ (by showing that there is a unique ergodic Gibbs
measure of slope $(s,t)$). Conjecturally when $(s,t)\in\N'$ the above limit exists and gives a unique ergodic Gibbs measure. So by a slightly premature use of terminology we call $\overline{\N'}$ the \emph{pure} phases.

\subsection{Free energy}

For further details about the material in this section see \cite{KOS}.

In order to compute properties of the measures $\mu_N(H_x,H_y)$ we impose extra weights on the vertices
as shown in Figure \ref{vtxwtse}, defined using two extra \emph{global}
parameters $X,Y$. 
\begin{figure}[htbp]
\centerline{\includegraphics[width=3in]{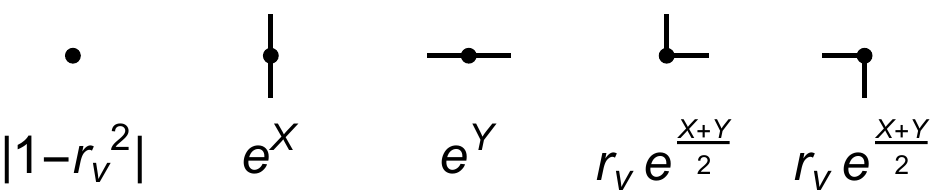}}
\caption{\label{vtxwtse}Vertex weights with fields $X,Y$.}
\end{figure}
Here $(X,Y)\in\R^2$ is referred to as the \emph{magnetic field}. The effect of the magnetic field is to 
weight every vertical edge by $e^X$ and every horizontal edge by $e^Y$.

We let $Z_N=Z_N(\vec\alpha,\vec\beta,X,Y)$ be the partition function, that is, the sum of weights of all MNLP configurations on $\G_N$ (with the vertex weights of Figure \ref{vtxwtse}). Let $\nu_N=\nu_N(\vec\alpha,\vec\beta,X,Y)$ be the natural 
probability measure on $\Omega_N$ assigning a configuration $m$ a probability
proportional to its weight: 
$$\nu_N(m) = \frac{e^{Xh_1+Yh_2}}{Z_N}\prod_{\text{$v$ corners}} r_v\prod_{\text{$v$ empty}}|1-r_v^2|$$
where $h_1=h_1(m),h_2=h_2(m)$ are the number vertical edges and horizontal edges of configuration $m$, the first product is over corners of $m$
and the second is over empty vertices of $m$.
Then $\mu_N(H_x,H_y)$ is the measure $\nu_N$ conditioned to have height change $(H_x,H_y)$, that is, to have $(h_1,h_2)=(NH_x,NH_y)$.

 Since the $X$- and $Y$-dependences of the weight of a configuration
only depend on $H_x,H_y$, $\nu_N(X,Y)$ is a mixture of the measures $\mu_N(H_x,H_y)$:
\be\label{mixture}\nu_N  = \frac1{Z'}\sum_{H_x,H_y} e^{N XH_x+N YH_y}\mu_N(H_x,H_y)\ee
for a constant $Z'=Z'(X,Y)$.

The \emph{free energy} $F(X,Y)=F_{\vec\alpha,\vec\beta}(X,Y)$
is defined to be
$$F(X,Y)= \lim_{N\to\infty}\frac1{N^2}\log Z_N.$$ 
The existence of this limit is a standard subadditivity argument.
The \emph{surface tension}
$\sigma(s,t)=\sigma_{\vec\alpha,\vec\beta}(s,t)$ is the Legendre dual of the free energy $F(X,Y)$,
$$ \sigma(s,t) = \min_{X,Y\in\R}\left(-F(X,Y) + sX+tY\right).$$

If $F$ is smooth near $(X,Y)$ we can define $s=F_X$ and $t=F_Y$
and $\sigma(s,t) = -F(X,Y)+sX+tY$. In this case in the limit of the expression (\ref{mixture})
the RHS concentrates on a single value $(s,t)=\lim_{N\to\infty}\frac1N(H_x,H_y)$. The 
measures $\nu_N(X,Y)$ have subsequential limits which are Gibbs measures of slope $(s,t)$.
Conjecturally (see \cite{Aggarwal}) in this case there is a unique limit 
which is equal to the $\mu_{s,t}$ defined above in (\ref{must}), and is the unique ergodic Gibbs measure of this slope.

If $F$ is not smooth at $(X,Y)$, the graph of $F$ has more than one supporting plane at $(X,Y)$.
Then $(s,t)$, the slope of the support plane, is not uniquely defined. \emph{A priori} 
the measure $\nu_N(X,Y)$ will be some convex combination of ergodic Gibbs
measures of slope $(s,t)$, for $(s,t)$ in the space of
slopes of supporting planes. In this situation we say that this set of slopes is in a \emph{coexistence phase}. A more detailed analysis, which we don't know how to do at present, 
is required to determine which convex 
combination occurs.

\subsection{The simply periodic five-vertex model}

Here we recall facts about the simply periodic five-vertex model, discussed in \cite{dGKW}. 
The simply periodic five-vertex model is the genus-zero five-vertex model in the special case $m_1=m_2=1$.
Let $r=\alpha\beta$. 
Here also there are two distinct cases $r<1$ and $r>1$, which differ in small but important details.

Our definition of vertex weights (Figure \ref{vtxwtse}) differs from those in \cite{dGKW} by the extra factor of $|1-r^2|$
for the ``empty vertex''. This difference can be compensated for by subtracting $\log|1-r^2|$ from $X,Y$; indeed,
dividing all weights of Figure \ref{vtxwtse} by $|1-r^2|$ gives the weights of \cite{dGKW}.
Thus our fields $X,Y$ are related to 
those $\X,\Y$ in \cite{dGKW} by: 
\be\label{newXY}Y = \Y+\log|1-r^2|,~~~~X = \X+\log|1-r^2|.\ee
This causes our formulas for the surface tension and free energy to be slightly different from those in \cite{dGKW}, see below. This is only a notational difference in the simply periodic case; we note that a shift in magnetic fields as in \eqref{newXY} results in a global affine additive term in the surface tension which does not effect limit shapes. The specific choice of weight $|1-r_v|^2$ for the empty vertex will be important, however, when considering higher periods. 

From the definition of the surface tension, at smooth points we have $\nabla \sigma (s,t)=(X,Y)$. The correspondence between the slope $(s,t)$ and the magnetic field $(X,Y)$ is most conveniently described in \emph{microcanonical} (or partial Legendre transform) variables, that is, by the relation between the mixed pairs $(t,X)$ and $(s,Y)$. These pairs are, in turn, encoded in two natural  \emph{conformal coordinates}, $z$ and $w$, which for the simply periodic five-vertex model, satisfy the algebraic relation 
\be\label{P}P(z,w) = 1-z-w+(1-r^2)zw=0.\ee 
This relation defines the ``spectral curve", and holds for both $r<1$ and $r>1$. Here $w$ varies over the upper half plane $\H$ and parameterizes $(s,Y)=(s(w),Y(w))$. At the same time $\bar z \in \H$ parametrizes $(t,X)=(t(\bar z),X(\bar z))$. 
In \cite{dGKW} the simply-periodic $5$-vertex model is solved by the Bethe Ansatz technique, or explicit diagonalization of the (vertical) transfer matrix;
the $z$ coordinate corresponds to the endpoint of the curve of Bethe roots, and the $w$ coordinate corresponds to the endpoint of the 
Bethe root curve for the \emph{horizontal} transfer matrix. The relation (\ref{P}) is mysterious from this point of view, but we know of no easier derivation. 
(The relation becomes slightly less mysterious after having verified in Section \ref{se:trivpot5vtx} that both $z$ and $w$ are intrinsic coordinates in the sense of Proposition \ref{prop:isothermal}. This implies that they are necessarily conformally related to each other.)

The functions $X,Y,s,t$ can be explicitly parameterized as follows (their derivation involves quite different methods than those of the current
paper, so we are using these formulas as a ``black box".)

\paragraph{The $r<1$ case.}
The relevant functions are (cf. \cite{dGKW},\S 4.3-4.4)
\be\label{XYstformulas}X=-\B(\bar z),~~~~Y=-\B(w),~~~~s=\frac{\arg w}{\arg \frac{w}{1-w}},~~~~t=\frac{\arg z}{\arg \frac{z}{1-z}},\ee
where $$\B(z) := \frac1{\pi}(\arg(z)\log|1-z| + \Im\,\Li(z)).$$ 
Here and henceforth we use the principal branch for the argument $\arg(\cdot)$, and  $\Li(z)$ is the dilogarithm function 
$$\Li(z) = -\int_0^z\log(1-z)\frac{dz}{z}.$$


As $w$ varies over the upper half-plane $\H$ it parametrizes the set of slopes in the pure phase $(s,t) \in \mathring{\N'}$
as indicated
in Figure \ref{wsmallr}.  A short calculation show that the limits $w\to \frac{1}{1-r^2},\infty$ blow up to correspond to the two boundary segments of $\N'$,
whereas the intervals when $w$ is in $(-\infty,0),(1,\frac{1}{1-r^2})$ and $(\frac{1}{1-r^2},\infty)$ ``blow down" to the three corners of $\N'$. The interval $w \in (0,1)$ parametrizes the co-existence phase boundary where the limiting value is $s=1-w$.
\begin{figure}[htbp]
\centerline{\includegraphics[width=3.5in]{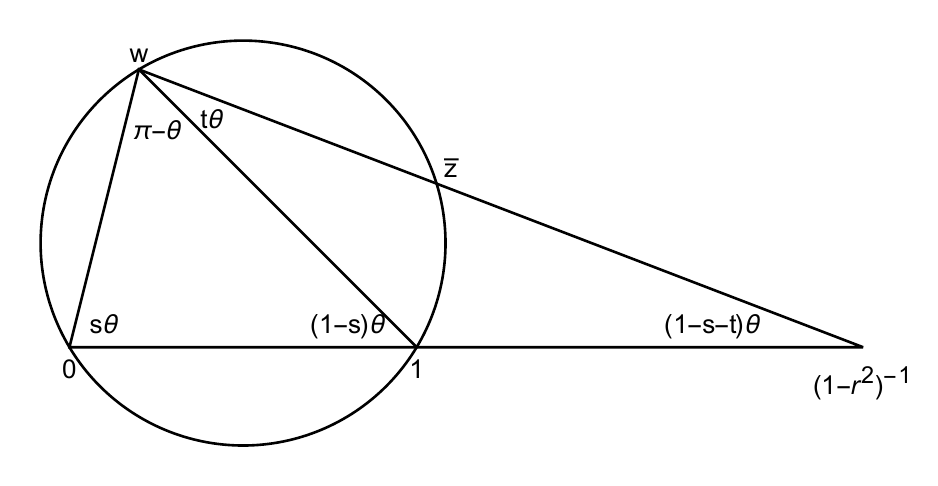}}
\caption{\label{wsmallr}Given $w$ in the upper half plane and $r<1$, define $\theta,s,t,$ and $\bar z$ as indicated, that is, $\theta = \arg\frac{w}{1-w},$ $s=\arg(w)/\theta$, $t=\arg(\bar z)/\theta$ and $z$ satisfies
$P(z,w)=0$.}
\end{figure}

The surface tension is zero in the coexistence phase (bounded by the line $s+t=1$ and
the hyperbola
$1-s-t+(1-r^{-2})st=0$) and in the pure phase it is (cf. \cite{dGKW},Prop. 5.1)
$$\sigma(s,t) = (1-s-t)\B(w) +(1-s)\B(1-(1-r^2)w) + s \B(\frac{(1-r^2)(1-w)}{1-(1-r^2)w}),$$
see Figure \ref{sigsimplesmallr}. 
\begin{figure}
\begin{center}\includegraphics[width=3in]{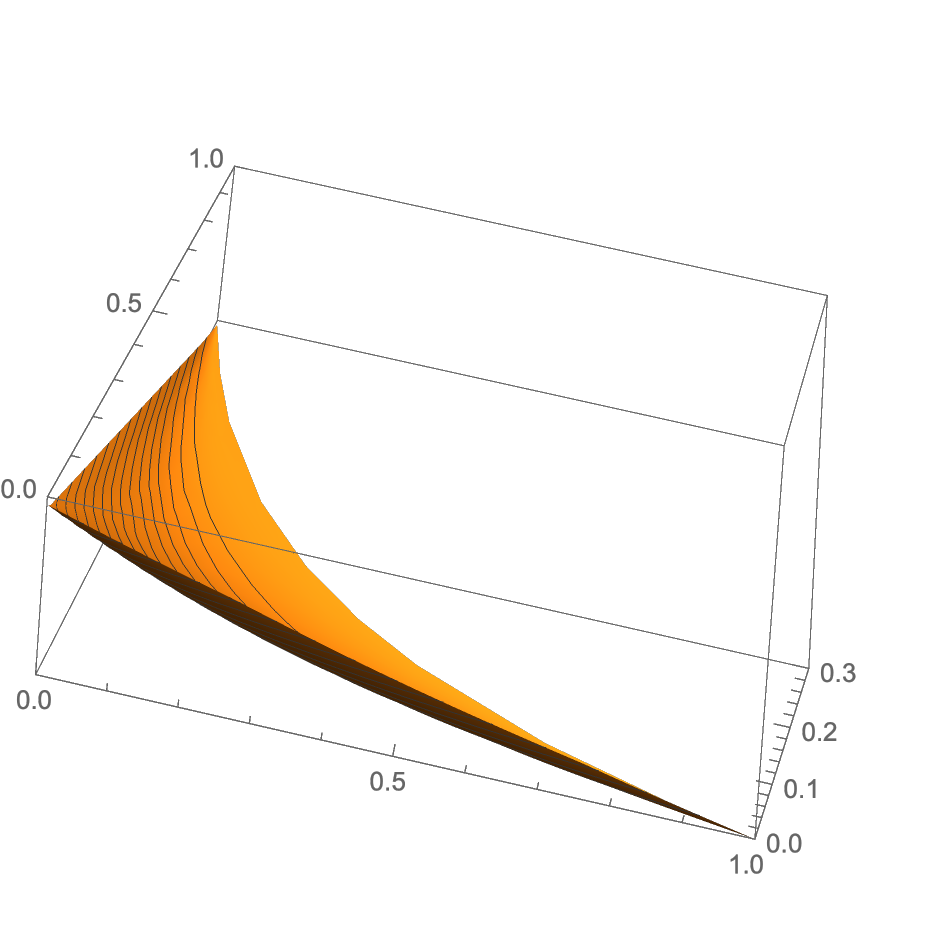}
\hspace{3em}\includegraphics[width=2.5in]{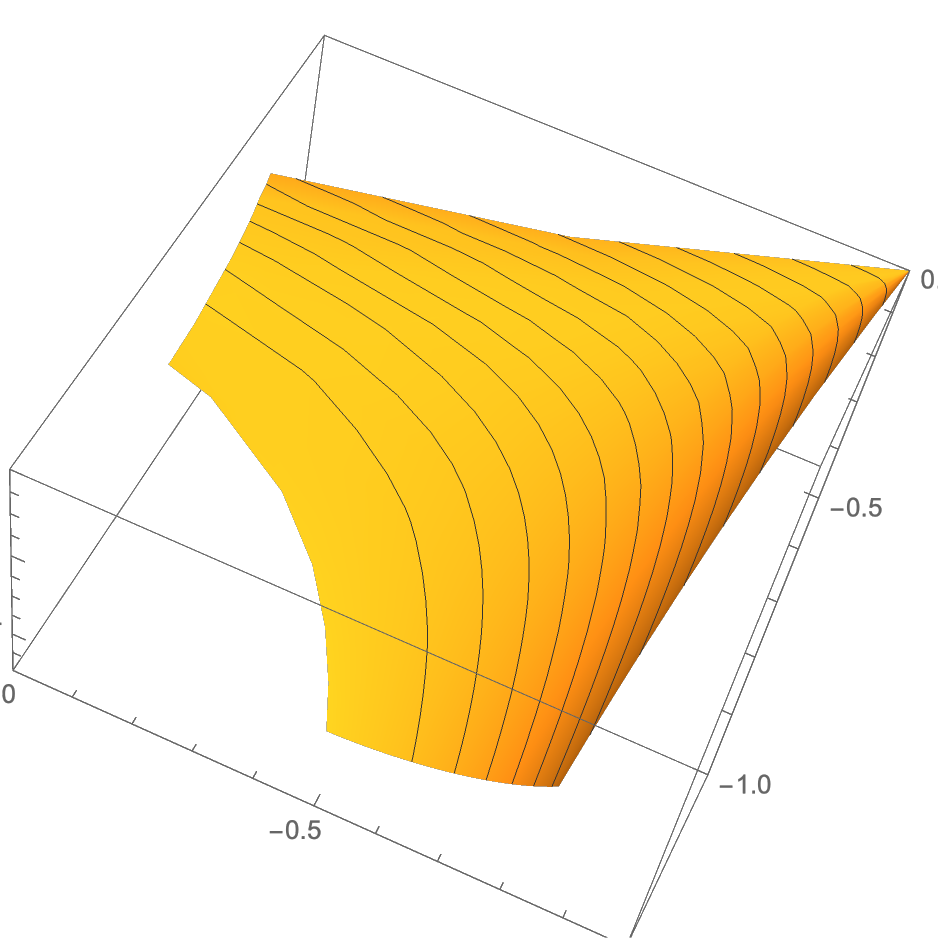}
\end{center}
\caption{\label{sigsimplesmallr} The simply periodic surface tension $\sigma(s,t)$ in the pure phase (left) and free energy $F(X,Y)$ (right) for $r=.5$.}
\end{figure}
Since $w$ and $z$ determine $r$ using (\ref{P}), we can rewrite this using only $z$ and 
$w$:
$$\sigma(s,t) = (1-s-t)\B(w) +(1-s)\B(\frac{1-w}{z}) + s \B(\frac{z+w-1}{w}).$$
This definition of $\sigma$ is symmetric under the simultaneous 
exchange $z\to \bar w, w\to\bar z$ and $s\leftrightarrow t$; this is 
a consequence of the obvious rotational symmetry (using the horizontal transfer matrix rather than the vertical one) or, alternatively,
the 5-term identity for the dilogarithm (see e.g. \cite{zagier2007dilogarithm}) using the fact that $0,1,w,\bar z$ lie on a circle.

 The free energy is 
 \begin{align}F(X,Y) &= -\sigma(s,t) + sX+tY\nonumber\\
 &= -(1-s)\B(w) -(1-s)\B(\frac{1-w}{z}) - s \B(\frac{z+w-1}{w})+s \B(z).\label{freeenergysmallr}\end{align}
 and is plotted in Figure \ref{sigsimplesmallr}.

\paragraph{The $r>1$ case.}
This time the relevant functions are (cf. \cite{dGKW},\S 8.3)
\be\label{XYstformulaslarge}X=\B(\bar z)-\log|z(1-z)|,~~~~Y=\B(w)-\log|w(1-w)|,~~~~
s=\frac{\pi-\arg w}{2\pi-\arg{\frac{w}{1-w}}},~~~~t=\frac{\pi+\arg z}{2\pi+\arg{\frac{z}{1-z}}},\ee
where $\B$ is defined as above.

As $w$ varies over $\H$, it parametrizes the entire set of slopes $(s,t) \in \mathring{\N}$ -- the correspondence is shown in Figure \ref{wbigr}.
\begin{figure}[htbp]
\centerline{\includegraphics[width=5in]{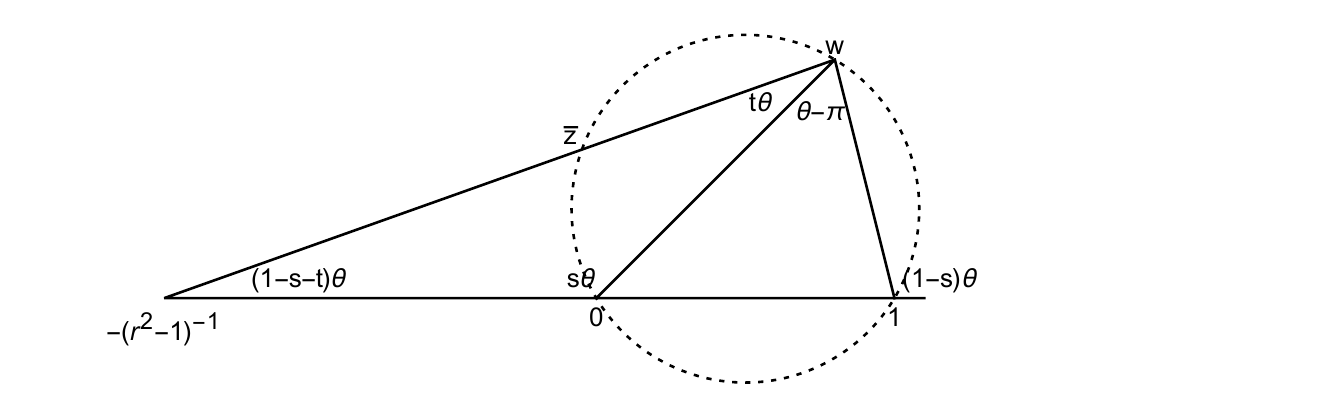}}
\caption{\label{wbigr}Given $w$ and $r>1$, define $\theta,s,t,$ and $\bar z$ as indicated. }
\end{figure}

The surface tension is
$$\sigma(s,t) = (1-s-t)(-\B(w)+\log|w(1-w)|) -(1-s)\B(1+(r^2-1)w) - s \B(\frac{(r^2-1)(w-1)}{1-(1-r^2)w}),$$
see Figure \ref{sigsimplebigr}. 
\begin{figure}
\begin{center}\includegraphics[width=2.8in]{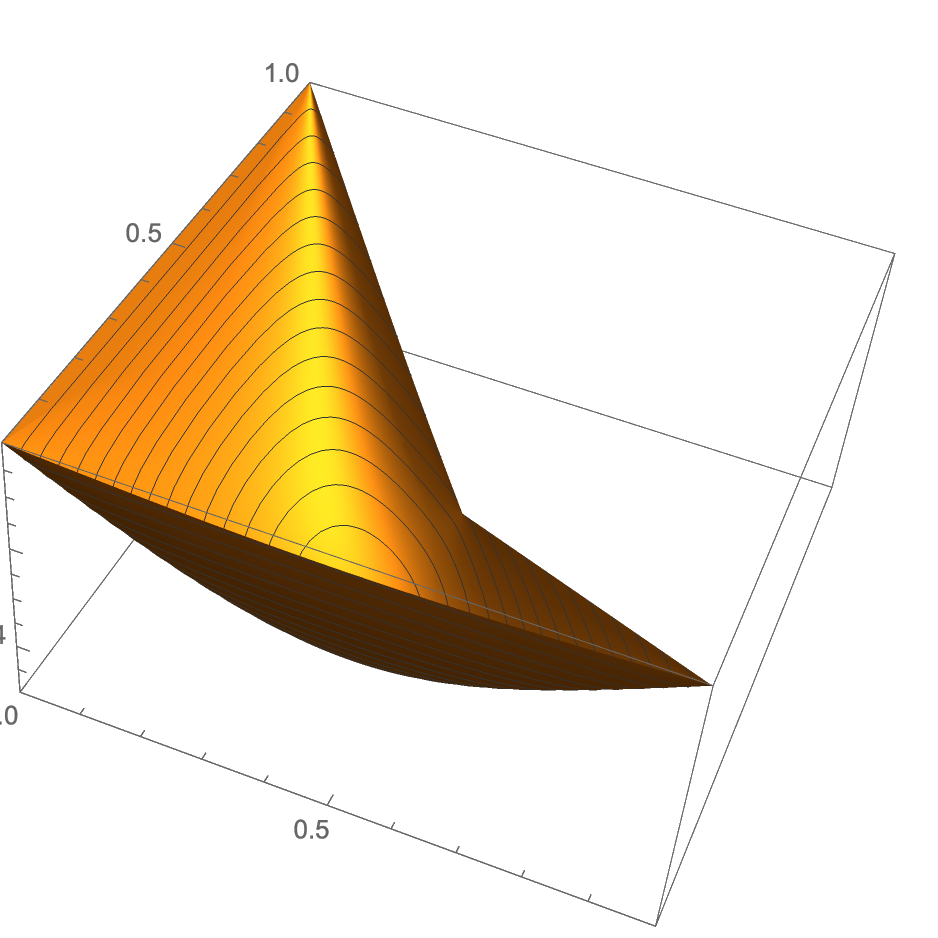}
\hspace{3em}\includegraphics[width=2.8in]{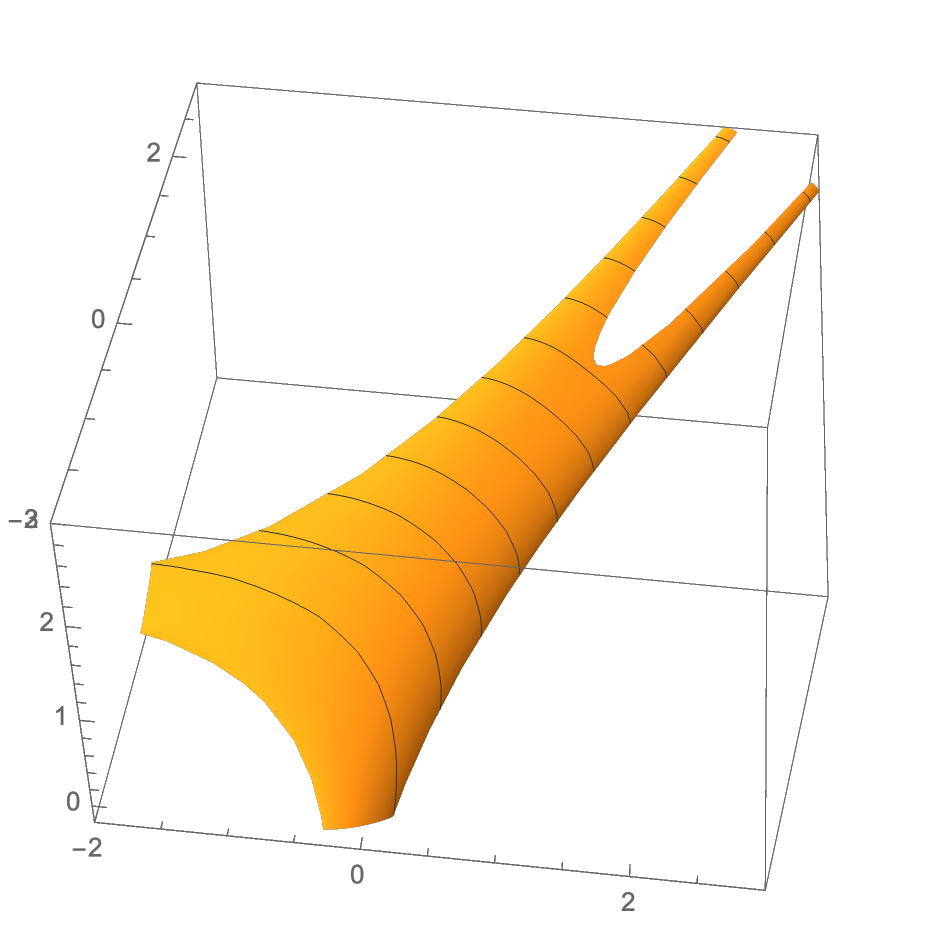}
\end{center}
\caption{\label{sigsimplebigr}The simply periodic surface tension $\sigma(s,t)$ (left) and free energy $F(X,Y)$ (right) for $r=2$.}
\end{figure}
Since $w$ and $z$ determine $r$ using (\ref{P}), we can rewrite this using only $z$ and $w$:
$$\sigma(s,t) = (1-s-t)(-\B(w)+\log|w(1-w)|) -(1-s)\B(\frac{1-w}{z}) - s \B(\frac{z+w-1}{w}).$$

 The free energy is $F(X,Y)=$
 \begin{align}=& -\sigma(s,t) + sX+tY\nonumber\\
 =& (1-s)(\B(w)-\log|w(1-w)|) +(1-s)\B(\frac{1-w}{z}) + s \B(\frac{z+w-1}{w})-s(\B(z)+\log|z(1-z)|).\label{freeenergybigr}\end{align}
 and is shown in Figure \ref{sigsimplebigr}.

\subsection{Trivial potential} 

In \cite{KP1} we introduced a complex variable $\zeta=\zeta(s,t)$ as an \emph{intrinsic coordinate} for a general gradient variational problem and the \emph{trivial potential} property in terms of this variable.
An intrinsic coordinate $\zeta=u+iv$
is a complex coordinate for which the Hessian of the surface tension $\sigma$ in terms of (the real and imaginary parts of) this coordinate is a scalar multiple of the identity: $\sigma_{uu} = \sigma_{vv}, \sigma_{uv}=0$.
Trivial potential is the property that the Hessian determinant of $\sigma$ 
is the fourth power of a harmonic function of the intrinsic variable.
The trivial potential property implies a certain exact parameterization property for solutions, see \cite{KP1}.

As in \cite{KP1} let us consider a smooth convex surface tension $\sigma$ defined on a closed simply-connected $\N' \subset \R^2$ which is strictly convex on $\mathring{\N'}$ with non-zero Hessian determinant. Assume that we are given an orientation reversing diffeomorphism $\zeta \colon \mathring{\N'} \to \H$. With such a change of variable, we can consider both $(s,t)$ and $(X,Y)=(\sigma_s,\sigma_t)$ as functions of the complex variable $\zeta\in\H$. The following proposition from \cite{KP1} then characterizes the intrinsic coordinate. 

\begin{proposition}\cite{KP1}
\label{prop:isothermal}
The following statements are each equivalent to $\zeta$ being an intrinsic coordinate
\begin{enumerate}
\item[(i)]
$ \frac{X_\zeta}{t_\zeta}+\frac{Y_\zeta}{s_\zeta}=0$,
\item[(ii)]
$\frac{s_\zeta}{t_\zeta}=\frac{-\sigma_{st} - i\sqrt{\det H_\sigma}}{\sigma_{ss}}. 
$
\end{enumerate}
and when either of these holds we necessarily have 
$$\frac{X_\zeta}{t_\zeta}=- i \sqrt{\det H_\sigma}=-\frac{Y_\zeta}{s_\zeta}.$$
\end{proposition}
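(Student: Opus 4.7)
My plan is to read all three conditions as the statement that $(s_\zeta, t_\zeta)$ is a complex null vector for $H_\sigma$, with the orientation-reversing hypothesis selecting one of the two null directions. First I will prove (i)$\Leftrightarrow$(ii) by a direct quadratic-formula argument, then identify this null-vector condition with $\zeta$ being intrinsic, and finally substitute to obtain the last display.

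For (i)$\Leftrightarrow$(ii), the chain rule gives $X_\zeta = \sigma_{ss} s_\zeta + \sigma_{st} t_\zeta$ and $Y_\zeta = \sigma_{ts} s_\zeta + \sigma_{tt} t_\zeta$. Multiplying (i) through by $s_\zeta t_\zeta$ converts it to the complex quadratic $\sigma_{ss} s_\zeta^2 + 2\sigma_{st} s_\zeta t_\zeta + \sigma_{tt} t_\zeta^2 = 0$, i.e., $(s_\zeta, t_\zeta)\, H_\sigma\, (s_\zeta, t_\zeta)^T = 0$. Solving for $\rho := s_\zeta/t_\zeta$ and using $\det H_\sigma > 0$ yields the two roots $\rho = (-\sigma_{st} \pm i\sqrt{\det H_\sigma})/\sigma_{ss}$. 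To pin down the sign I compute $\Im(s_\zeta \bar t_\zeta) = \tfrac14(s_u t_v - s_v t_u)$, which is negative precisely because the Jacobian of the orientation-reversing map $\zeta$ is; hence $\Im \rho < 0$, which forces the minus branch and recovers (ii).

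For intrinsic$\Leftrightarrow$(i), the key identification is between ``the Hessian of $\sigma$ in the $(u,v)$ coordinate is a scalar multiple of the identity'' and the isothermal condition $J^T H_\sigma J = \lambda I$ for the Hessian metric, where $J$ is the Jacobian of $(s,t)$ with respect to $(u,v)$. Writing $s_\zeta = (s_u - i s_v)/2$ and $t_\zeta = (t_u - i t_v)/2$ and expanding, I expect the real part of $(s_\zeta, t_\zeta)\, H_\sigma\, (s_\zeta, t_\zeta)^T$ to equal $\tfrac14[(J^T H_\sigma J)_{11} - (J^T H_\sigma J)_{22}]$ and its imaginary part to equal $-\tfrac12 (J^T H_\sigma J)_{12}$, so the complex quadratic vanishes exactly when $J^T H_\sigma J$ is a scalar multiple of $I$.

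For the final display, substituting (ii) into the chain-rule formula gives $X_\zeta/t_\zeta = \sigma_{ss}(s_\zeta/t_\zeta) + \sigma_{st} = -i\sqrt{\det H_\sigma}$. Inverting (ii) using $\sigma_{st}^2 + \det H_\sigma = \sigma_{ss}\sigma_{tt}$ produces $t_\zeta/s_\zeta = (-\sigma_{st} + i\sqrt{\det H_\sigma})/\sigma_{tt}$, whence $Y_\zeta/s_\zeta = \sigma_{st} + \sigma_{tt}(t_\zeta/s_\zeta) = i\sqrt{\det H_\sigma}$, matching the claimed $-Y_\zeta/s_\zeta = X_\zeta/t_\zeta = -i\sqrt{\det H_\sigma}$. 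I expect the only genuinely subtle step to be the orientation-based sign choice in the quadratic; the rest is chain-rule bookkeeping combined with the quadratic formula.
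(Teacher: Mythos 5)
Your argument is correct, and its computational core coincides with the paper's own (suppressed) proof: set $\gamma=s_\zeta/t_\zeta$, use the chain rule to turn (i) into the quadratic $\sigma_{ss}\gamma^2+2\sigma_{st}\gamma+\sigma_{tt}=0$, select the root with $\Im\gamma<0$ from the orientation-reversing hypothesis, and substitute back to get the final display; your sign check via $\Im(s_\zeta\bar t_\zeta)=\tfrac14(s_ut_v-s_vt_u)<0$ is a correct way to make that selection explicit. The one place you genuinely diverge is the link to ``intrinsic'': the paper simply invokes the definition from \cite{KP1}, namely that $\zeta$ solves the Beltrami-type equation $\zeta_s/\zeta_t=-1/\bar\gamma$ with $\gamma=(-\sigma_{st}-i\sqrt{\det H_\sigma})/\sigma_{ss}$, and observes that this is condition (ii) for the inverse map. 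You instead read ``intrinsic'' as the isothermal condition $J^TH_\sigma J=\lambda I$ for the Hessian metric and verify directly that
\[
(s_\zeta,t_\zeta)H_\sigma(s_\zeta,t_\zeta)^T=\tfrac14\bigl[(J^TH_\sigma J)_{11}-(J^TH_\sigma J)_{22}\bigr]-\tfrac{i}{2}(J^TH_\sigma J)_{12},
\]
which is a correct identity and gives a self-contained equivalence without appealing to the \cite{KP1} equation. The only caveat is interpretive: the paper's phrase ``$\sigma_{uu}=\sigma_{vv}$, $\sigma_{uv}=0$'' must be read as a statement about the pulled-back quadratic form $J^TH_\sigma J$ (the Hessian metric), not the literal Hessian of the composite $\sigma(s(u,v),t(u,v))$, which would carry extra terms $\sigma_s s_{uu}+\sigma_t t_{uu}$, etc.; you make the right choice, consistent with the isothermal-coordinate definition in \cite{KP1}, but it would be worth stating that reading explicitly rather than leaving it as ``I expect.''
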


\old{
\begin{proof}
Define $\gamma$ by $\gamma=\frac{s_\zeta}{t_\zeta}$. Since $X_\zeta=(\sigma_s)_\zeta=\sigma_{ss} s_\zeta + \sigma_{st} t_\zeta$ we can write 
$\frac{X_\zeta}{t_\zeta}=\sigma_{ss} \gamma +\sigma_{st}$.
Similarly, 
$\frac{Y_\zeta}{s_\zeta}=\sigma_{tt} \frac{1}{\gamma} +\sigma_{st}$. Now $(i)$ is equivalent to the equation
\[ \sigma_{ss} \gamma^2 +2\sigma_{st} \gamma +\sigma_{tt}=0,
\]
with two solutions
\[ \gamma=\frac{-\sigma_{st} \pm i \sqrt{\det H_\sigma}}{\sigma_{ss}}.
\]
Because $\zeta$ is assumed to be orientation reversing, $\Im \gamma<0$ and thus we have the minus sign above. This leads to $\frac{X_\zeta}{t_\zeta}=-i \sqrt{\det H_\sigma}=-\frac{Y_\zeta}{s_\zeta}$. Thus $(i),(ii)$ and $(iii)$ are all equivalent. On the other hand by definition, \cite{KP1}, the intrinsic complex variable $\zeta$ is an orientation-reversing homeomorphism $\zeta \colon \mathring{\mathcal{N'}} \to \H$ solving the equation
\[
\frac{\zeta_s}{\zeta_t}=-\frac{1}{\bar \gamma}, \quad \text{with} \quad \gamma=\frac{-\sigma_{st}-i\sqrt{\det H_\sigma}}{\sigma_{ss}}.
\]
This is equivalent to $(iii)$ in terms of the inverse mapping.
\end{proof}
}

\subsection{Trivial potential for the $5$-vertex model}
\label{se:trivpot5vtx}
Here we verify that in the case of the simply periodic $5$-vertex surface tension, the conformal coordinates $z,w$ introduced above are intrinsic coordinates,
and show that the surface tension has trivial potential. Later, in Section \ref{se:Hessiandeterminant} we extend this property to the genus-zero model.

For the simply periodic five-vertex model we find from \eqref{XYstformulas} that in the small $r$ case
\begin{align*}
Y_w &= \frac1{\pi}\frac{d}{dw}\left(\frac{(\log w - \log \bar w)}{2i}\frac{(\log(1-w)+\log(1-\bar w))}{2} + \frac{\Li(w)-\Li(\bar w)}{2i}\right)\\
&= \frac1{4\pi i}\left(-\frac{\log(1-w)}{w}+\frac{\log(1-\bar w)}{w}-\frac{\log w}{1-w}+\frac{\log\bar w}{1-w}\right)\\
\end{align*}
and
\begin{align*}
s_w &= \frac{d}{dw}\left(\frac{\log{w}-\log{\bar w}}{\log\frac{w}{1-w}-\log\frac{\bar w}{1-\bar w}}\right)\\
&=-\frac{1}{4\theta^2}\left(\frac1{w}(\log\frac{w}{1-w}-\log\frac{\bar w}{1-\bar w})-(\log{w}-\log{\bar w})(\frac1w+\frac1{1-w})\right)\\
&=-\frac{1}{4\theta^2}\left(-\frac{\log(1-w)}{w}+\frac{\log(1-\bar w)}{w}-\frac{\log w}{1-w}+\frac{\log\bar w}{1-w}\right)
\end{align*}
where $\theta=\arg\frac{w}{1-w}.$
A similar calculation holds for $X_z$ and $t_z$. So we find
\be\label{Yovers}
 \frac{Y_w}{s_w}=\frac{i}{\pi} \left( \arg \frac{w}{1-w}\right)^2=\frac{i}{\pi} \theta^2=\frac{i}{\pi} \left( \arg \frac{z}{1-z}\right)^2=-\frac{X_z}{t_z}=-\frac{X_w}{t_w}.
\ee
In view of Proposition \ref{prop:isothermal}, this means that the $z,w$ coordinates are intrinsic coordinates. Furthermore, by the same proposition
\[ \kappa(w):=\sqrt{\det H_\sigma}=\frac{\theta^2}{\pi}=\frac{1}{\pi} \left( \arg \frac{w}{1-w}\right)^2.
\]
Since, $\kappa^{1/2}(w)$ is a harmonic function of $w$, we conclude that the surface tension $\sigma$ has trivial potential in the pure phase $\N'$. 

Similarly, in the large $r$ case we find from \eqref{XYstformulaslarge}
\[ \frac{Y_w}{s_w}=\frac{i}{\pi} \theta^2=-\frac{X_z}{t_z}=-\frac{X_w}{t_w},
\]
and hence $z$ and $w$ are intrinsic coordinates, and this time
\[ \kappa(w)=\sqrt{\det H_\sigma}=\frac{\theta^2}{\pi}=\frac{1}{\pi} \left(2\pi- \arg \frac{w}{1-w}\right)^2.
\]
Again we find that $\kappa^{1/2}(w)$ is a harmonic function. This means that $\sigma$ has trivial potential in all of $\N$.

\section{Commuting transfer matrices}

We now consider the general genus-zero five-vertex model, with periodic weights $r_v$.
In the small $r$ case, we can remove the absolute value sign from the weight $|1-r_v^2|$, replacing it $1-r_v^2$.
Likewise in the large $r$ case, we replace $|1-r_v^2|$ with $r_v^2-1$. 

\begin{lemma}\label{Tcomm} Let $T_\beta$ be the (vertical) transfer matrix associated to a row of weight $\beta$.
In either the small $r$ case or the large $r$ case,
$T_{\beta}$ and $T_{\beta'}$ commute.
\end{lemma}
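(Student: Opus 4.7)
The plan is the standard Yang--Baxter / RLL strategy for commuting transfer matrices. First I would rewrite the vertical transfer matrix in Lax-operator form. Let $V=\C^2$ be the state space of a single edge (labelled by ``no path'' or ``path present''), and let $L(r)\in \mathrm{End}(V_{\mathrm{aux}}\otimes V_{\mathrm{phys}})$ be the local Lax operator whose sixteen matrix entries are the five vertex weights of Figure~\ref{vtxwts} (with zero for the forbidden sixth vertex, and with the empty entry $1-r^2$ or $r^2-1$ according to the case). Then $T_\beta$ is the row monodromy traced over the horizontal auxiliary line,
\[
T_\beta \;=\; \mathrm{tr}_{V_{\mathrm{aux}}}\bigl(L_1(\alpha_1\beta)\,L_2(\alpha_2\beta)\cdots L_L(\alpha_L\beta)\bigr),
\]
where $L_i$ acts on the $i$-th physical (vertical-edge) space and on the common auxiliary (horizontal-edge) space.

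The crux is to produce an invertible R-matrix $R(\beta,\beta')\in\mathrm{End}(V_{a_1}\otimes V_{a_2})$ satisfying the local intertwining relation
\[
R_{12}(\beta,\beta')\,L_{1}(\alpha\beta)\,L_{2}(\alpha\beta') \;=\; L_{2}(\alpha\beta')\,L_{1}(\alpha\beta)\,R_{12}(\beta,\beta')
\]
on $V_{a_1}\otimes V_{a_2}\otimes V_{\mathrm{phys}}$, for \emph{every} value of the column parameter $\alpha$. Since $L(r)$ depends on $\alpha$ and $\beta$ only through the product $r=\alpha\beta$, it is natural to seek an R-matrix depending only on the ratio $\beta'/\beta$; the RLL relation then splits, state-by-state, into a small system of scalar identities in which the $\alpha$-dependence factorizes and cancels because $r$ is of rank-one form. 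The five-vertex model is the degeneration of the six-vertex model in which one of the six weights vanishes, so a natural ansatz is to take the trigonometric six-vertex R-matrix and verify that it survives this degeneration (possibly after a gauge transformation) and intertwines our specific five-vertex Lax operators. The peculiar normalization $|1-r^2|$ of the empty vertex, highlighted just before the lemma statement, is precisely what makes this possible.

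With RLL in hand, the conclusion is the standard ``train'' argument: iterating the local relation column by column yields
\[
R_{12}(\beta,\beta')\,\prod_{i=1}^{L} L_{1,i}(\alpha_i\beta)\,L_{2,i}(\alpha_i\beta') \;=\; \prod_{i=1}^{L} L_{2,i}(\alpha_i\beta')\,L_{1,i}(\alpha_i\beta)\,R_{12}(\beta,\beta'),
\]
and taking the trace over both $V_{a_1}$ and $V_{a_2}$ and cancelling the invertible $R$ gives $T_\beta T_{\beta'} = T_{\beta'} T_\beta$ on $V_{\mathrm{phys}}^{\otimes L}$. The main obstacle is writing down the R-matrix explicitly and checking RLL for the specific five-vertex weights in both the small-$r$ and large-$r$ cases; this is a finite but tedious entry-by-entry verification where one must be careful that the $\alpha$-dependence genuinely cancels on both sides. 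A minor secondary check is that the periodic closure of the auxiliary line causes no trouble, which it does not because $R$ acts trivially on the physical row.
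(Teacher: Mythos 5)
Your proposal is a plan rather than a proof, and its one essential ingredient is exactly the thing the paper says it could not find. The entire argument rests on exhibiting an invertible $R$-matrix, depending only on $\beta,\beta'$, that intertwines $L(\alpha\beta)$ and $L(\alpha\beta')$ for \emph{every} column parameter $\alpha$; you defer this to a ``finite but tedious entry-by-entry verification'' and justify the expected cancellation of the $\alpha$-dependence only by the heuristic that $r=\alpha\beta$ has rank-one form. But the authors explicitly remark, immediately after the lemma, that they looked for a Yang--Baxter structure and could not find one: the five-vertex model is a degeneration of a staggered six-vertex model with $\Delta\to\pm\infty$, and that limit is delicate because the magnetic fields must be sent to infinity simultaneously, so the trigonometric six-vertex $R$-matrix does not simply survive the degeneration. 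Note also that the empty-vertex weight $|1-r_v^2|$ depends on $\alpha$ as well as $\beta$, so the Lax operator is not of the usual ``fixed matrix times spectral-parameter functions'' form, and it is genuinely unclear that a $\beta$-only $R$-matrix exists. Until you actually write down $R(\beta,\beta')$ and check the RLL relation in both the small-$r$ and large-$r$ cases, the proof has a hole precisely where the work is.

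For comparison, the paper's proof is direct and combinatorial. It computes the matrix element $\langle \vec z|T_{\beta'}T_\beta|\vec x\rangle$ of the product of two rows, observes that the paths decompose into independent components between consecutive endpoints, pairs the ``dangling leg'' factors $\beta'$ (left) and $\beta$ (right) of adjacent components into symmetric monomials, and reduces the remaining case to the symmetry in $x,y$ of
\[
\sum_{k=0}^{n}A_k\Bigl(\prod_{i=0}^{k-1}(1-A_i x)\Bigr)\Bigl(\prod_{i=k+1}^{n}(1-A_i y)\Bigr),
\]
whose coefficient of $x^i y^j$ is $(-1)^{i+j}e_{i+j+1}(A_0,\dots,A_n)$. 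This bypasses the $R$-matrix question entirely. If you want to salvage your approach, you would need to either construct the intertwiner explicitly (which would be a result of independent interest, given the authors' remark) or fall back on a direct symmetry argument of the above kind.
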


There is a corresponding statement, rotating by $90^{\circ}$, with ``horizontal'' transfer matrices replacing ``vertical" transfer matrices.

\begin{rmk}
One can in principle prove this lemma using the Yang-Baxter equation. Indeed, the model is a degeneration of a staggered six-vertex model with $\Delta \to \pm \infty$, and the $6$-vertex model with fixed $\Delta$ has a well known Yang-Baxter equation \cite{Baxter}. The details of the YB equation are somewhat involved, however, and we found that proving the lemma directly was simpler.
\end{rmk}
 
\begin{proof} We assume we are in the small $r$ case; the large $r$ case is identical
replacing each $1-r_v^2$ with $r_v^2-1$ below. 

We compute the matrix element 
$\langle \vec z|T_{\beta'}T_\beta|\vec x\rangle$ and show that it is symmetric in $\beta$ and $\beta'$ for any $\vec x,\vec z$.
The situation is as illustrated in Figure \ref{comm}.
\begin{figure}
\begin{center}\includegraphics[width=5in]{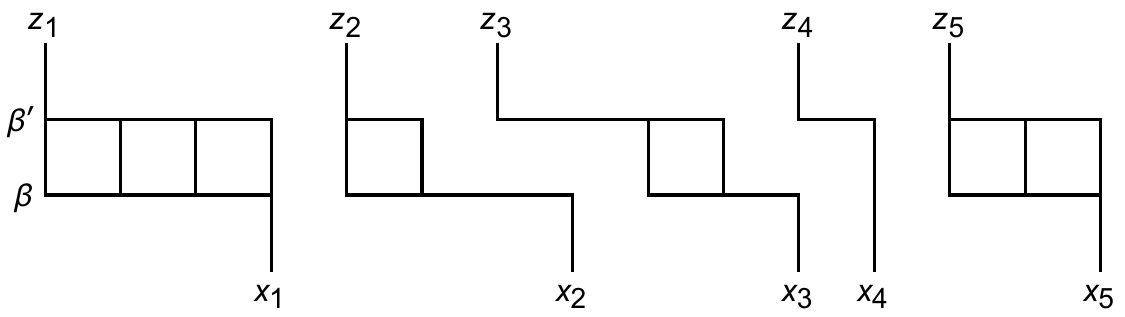}\end{center}
\caption{\label{comm}Calculation for commutation of $T_\beta,T_{\beta'}$.}
\end{figure}
Each NW path from $x_i$ must end at $z_i$ while avoiding adjacent paths.
That is, each path is a NW path in the illustrated component. 
The matrix element $\langle \vec z|T_{\beta'}T_\beta|\vec x\rangle$ is the product over all components, of the contribution  of each component.
We claim that the weight of each component is a symmetric polynomial in $\beta,\beta'$ except for a monomial
which becomes symmetric when paired with the corresponding monomials for the adjacent components.
As an example, consider the weight of the third component (containing $x_3$ and $z_3$). 
Suppose for simplicity that $z_3$ is at $x$-coordinate $0$.
There are two possible paths in this component, and their weights are
$$r_{0\beta'}r_{2\beta'}r_{2\beta}r_{4\beta}(1-r_{3\beta'}^2)$$
and
$$r_{0\beta'}r_{3\beta'}r_{3\beta}r_{4\beta}(1-r_{2\beta}^2).$$
Their sum is
$$r_{0\beta'}r_{4\beta}(\alpha_2^2\beta\beta'(1-\alpha_3^2(\beta')^2)+\alpha_3^2\beta\beta'(1-\alpha_2^2\beta^2))$$
which is symmetric.

To deal with the general situation, there are several cases: the case like that of $x_1$ in the figure, where there are no
``dangling legs", or cases where there is a dangling leg to the left, or to the right (like for $x_2$), or both (like for $x_3$). Note that a component has a dangling leg on 
the left if and only if the component to its left has a right dangling leg.
Since a dangling leg on the left contributes a factor $\beta'$ and one on the right contributes a factor $\beta$,
these can be multiplied in pairs to give a symmetric monomial. 

It therefore suffices to consider a component like that containing $x_3$ (but of general length). 
For a case like $x_3$ where the component has length $n$, the calculation involves showing that the polynomial
\be\label{Apoly}\sum_{k=0}^nA_k\left(\prod_{i=0}^{k-1}(1-A_i x)\right)\left(\prod_{i=k+1}^n(1-A_iy)\right)\ee
is symmetric in $x$ and $y$ (where we took $A_i = \alpha_i^2$, $x=\beta$ and $y=\beta'$).

However one can see that in (\ref{Apoly}) the coefficient of $x^iy^j$ is $(-1)^{i+j}e_{i+j+1}$ where $e_k$ is the elementary symmetric polynomial in the $A_0,\dots,A_n$. The symmetry follows.
\end{proof}

\subsection{Free energy}

The free energy $F_r(X,Y)$ for the simply periodic model (with weight $r$) is given in equations \eqref{freeenergysmallr} and \eqref{freeenergybigr}.
For the periodic model, we can calculate the free energy $F_{\vec\alpha,\vec\beta}(X,Y)$ as follows.
The computation is the same in both the small $r$ and large $r$ cases except for minor modifications.

Recall the \emph{microcanonical free energy} $\F_r(s,Y)$ which is the free energy in the simply periodic case when we restrict
the number $n$ of particles/paths to have horizontal density $s$; 
it is the limit as $n,N\to\infty$ (with $n/N\to s$) of the log of the normalized 
leading eigenvalue of the transfer matrix $T=T_{r,Y}(n,N)$:
$$\F_r(s,Y) = \lim_{N\to\infty}\frac1N\log\Lambda(r,n,N,Y).$$

\paragraph{The $m_1=1$ case.}
Suppose first that $m_1=1$, and set $\alpha=\alpha_1$. For fixed $n,N$, the transfer matrices $T_r,T_{r'}$ commute, by Lemma \ref{Tcomm}. 
We therefore find that applying one vertical period of (vertical) transfer matrices, the eigenvalues multiply and we get
$$\F_{\alpha,\vec\beta}(s,Y) = \frac1{m_2}\sum_{j=1}^{m_2} \F_{\alpha,\beta_j}(s,Y).$$ 

From $\F_{\alpha,\vec\beta}(s,Y)$ we can take the Legendre transform in $Y$ to get 
$\sigma_{\alpha,\vec\beta}(s,t)$:
$$\sigma_{\alpha,\vec\beta}(s,t) = -\F_{\alpha,\vec\beta}(s,Y) +Yt$$
where $t=\frac1{m_2}\sum_{j=1}^{m_2} t_j$ and
$t_j = \frac{\partial}{\partial Y}\F_{\alpha,\beta_j}(s,Y)$ is the vertical slope associated to row $j$.
Thus
\be\label{sigmahalfway}\sigma_{\alpha,\vec\beta}(s,t) = \frac1{m_2}\sum_{j=1}^{m_2} \sigma_{\alpha,\beta_j}(s,t_j).\ee
Despite appearances, the RHS of (\ref{sigmahalfway}) is indeed a function of $t$,  although the dependence on $t$ is not explicit: $t_j$ is a function of $t$ given
by $t_j = \frac{\partial}{\partial Y}\F_{\alpha,\beta_j}(s,Y)$ where $Y$ is determined implicitly by 
$\frac{\partial}{\partial Y} \F_{\alpha,\vec\beta}(s,Y)=t.$
 
We can likewise apply the Legendre transform in $s$ of $\F_{\alpha,\vec\beta}(s,Y)$ to get 
\begin{align*}
F_{\alpha,\vec\beta}(X,Y) &=-\F_{\alpha,\vec\beta}(s,Y)+Xs\\
&= \frac1{m_2}\sum_{j=1}^{m_2} F_{\alpha,\beta_j}(X_j,Y)
\end{align*}
where $X=\frac1{m_2}\sum_{j=1}^{m_2} X_j$ and $X_j = \frac{\partial}{\partial s}\F_{\alpha,\beta_j}(s,Y).$

\paragraph{General $m_1$ case.}

For each column $i$ we have $\sigma_{\alpha_i,\vec\beta}(s,t)$ from (\ref{sigmahalfway}) above.  
Applying the Legendre transform in $s$ we get
$\F_{\alpha_i,\vec\beta}(X,t).$ This quantity is the limit of the logarithm of the leading eigenvalue
of the \emph{horizontal} transfer matrix with horizontal weight $\alpha_i$ and periodic vertical weights 
$\vec\beta$. 
We have 
$$\F_{\alpha_i,\vec\beta}(X,t) = -\sigma_{\alpha_i,\vec\beta}(s,t) + X s,$$
where 
\be\label{Xi}X = \frac{\partial}{\partial s}\sigma_{\alpha_i,\vec\beta}(s,t) = \frac1{m_2}\sum_{j=1}^{m_2}\frac{\partial}{\partial s} \sigma_{\alpha_i,\beta_j}(s,t_j).\ee

Now we can apply the commutation of these horizontal transfer matrices for different $\alpha_i$'s to get
$$\F_{\vec\alpha,\vec\beta}(X,t) = \frac1{m_1}\sum_{i=1}^{m_1}\F_{\alpha_i,\vec\beta}(X,t).$$
One more Legendre transform gives either the free energy
$F_{\vec\alpha,\vec\beta}(X,Y)$ or surface tension $\sigma_{\vec\alpha,\vec\beta}(s,t).$

We can simplify the computation by working with the conformal parameters.
Given $X,Y,\vec\alpha,\vec\beta$, we define $s_i$ and $t_j$ by the equations
 $\frac{\partial}{\partial s}\sigma_{\alpha_i,\vec\beta}(s_i,t)=X$ 
and
$\frac{\partial}{\partial t}\sigma_{\vec\alpha,\beta_j}(s,t_j)=Y$
respectively.  From $s_i,t_j$ and $r_{ij}=\alpha_i\beta_j$ we can find $w_i,z_j$ (from the formulas for
the simply periodic case (\ref{XYstformulas})), satisfying
\be\label{zwij}
1-z_j-w_i+(1-r_{ij}^2)z_jw_i=0.
\ee 
We can rewrite this as
$$\alpha_i^2\beta_j^2 = \frac{(1-z_j)(1-w_i)}{z_jw_i}$$
or
\be\label{zwfromu} \frac{\alpha_i^2w_i}{1-w_i}=u=\frac{1-z_j}{\beta_j^2z_j}.\ee
We see that indeed $z_j$ only depends on $j$ and $w_i$ only depends on $i$
(and the parameter $u \in \H$ is independent of both $i$ and $j$).
Thus we have associated
a particular $w$ value, $w_i$, to each column and a particular $z$ value $z_j$ to each row.
These values in turn define $X_j=X(z_j)$ and $Y_i=Y(w_i)$ via the formulas for the simply periodic case \eqref{XYstformulas}.

Now note that in (\ref{Xi}), with $s=s_i$, the expression $\frac{\partial}{\partial s} \sigma_{\alpha_i,\beta_j}(s_i,t_j)$
is independent of $i$, and equal to $X_j=-\B(\bar z_j)$. Thus $X=\frac1{m_2}\sum_j X_j=-\frac1{m_2}\sum_{j=1}^{m_2} \B(\bar z_j).$
Symmetrically we find $Y=\sum_i Y_i=-\frac1{m_1}\sum_{i=1}^{m_1} \B(w_i)$.
Moreover $t= \frac1{m_2}\sum t_{j}$ and
$s = \frac1{m_1}\sum s_{i}$.

We can finally reconstruct $\sigma_{\vec\alpha,\vec\beta}(s,t)$ as a sum (really the average) of the individual contributions 
$\sigma_{\alpha_i,\beta_j}(s_i,t_j)$,
and likewise $F_{\vec\alpha,\vec\beta}(X,Y)$ is the average of the individual $F_{\alpha_i,\beta_j}(X_i,Y_j)$,
where $s_i, t_j,X_i,Y_j$ are determined from $z_j,w_i$ as in the simply periodic case (\ref{XYstformulas}),(\ref{XYstformulaslarge}).

In the small $r$ case,
$$\sigma(s,t) = \frac1{m_1m_2}\sum_{i=1}^{m_1}\sum_{j=1}^{m_2} (1-s-t)\B(w_i)+(1-s)\B(1-(1-r_{ij}^2)w_i)+s\B(\frac{(1-r_{ij}^2)(1-w_i)}{1-(1-r_{ij}^2)w_i}).$$
Using $1-r_{ij}^2=\frac{z_j+w_i-1}{z_jw_i}$ this is
\be\label{doublesumsig}\sigma(s,t) = \frac1{m_1m_2}\sum_{i=1}^{m_1}\sum_{j=1}^{m_2} (1-s-t)\B(w_i)+(1-s)\B(\frac{1-w_i}{z_j})+s\B(\frac{z_j+w_i-1}{w_i}).\ee

The free energy is 
$$F_{\vec\alpha,\vec\beta}(X,Y) = \frac1{m_1m_2}\sum_{i=1}^{m_1}\sum_{j=1}^{m_2}-(1-s)\B(w_i)-
(1-s)\B(\frac{1-w_i}{z_j})-s \B(\frac{z_j+w_i-1}{w_i}) + s\B(z_j).$$

The corresponding expressions in the large $r$ case are similar and read as follows.

\be\label{sigmabigrgeneral}\sigma(s,t) = \frac1{m_1m_2}\sum_{i=1}^{m_1}\sum_{j=1}^{m_2} (1-s-t)(-\B(w_i)+\log|w_i(1-w_i)|) -(1-s)\B(\frac{1-w_i}{z_j}) - s \B(\frac{z_j+w_i-1}{w_i}) \ee

{\footnotesize$$F_{\vec\alpha,\vec\beta}(X,Y) = \frac1{m_1m_2}\sum_{i=1}^{m_1}\sum_{j=1}^{m_2}
-(1-s)(-\B(w_i)+\log|w_i(1-w_i)|) +(1-s)\B(\frac{1-w_i}{z_j}) + s \B(\frac{z_j+w_i-1}{w_i})+s(\B(z_j)-\log|z_j(1-z_j)|).$$}

\subsection{Hessian determinant.}
\label{se:Hessiandeterminant}

Recall equation \eqref{zwfromu}
\[ \frac{\alpha_i^2w_i}{1-w_i}=u=\frac{1-z_j}{\beta_j^2z_j}\]
for a parameter $u \in \H$ in the upper half plane independent of $i,j$. 
Note that the angle $\theta$ is the same at every vertex in the fundamental domain. Namely, $\theta=\arg u$ in the small $r$ case and $\theta=2\pi-\arg u$ in the large $r$ case. 

For each term in the double sum (\ref{doublesumsig}) as well as in \eqref{sigmabigrgeneral}
for $\sigma$, any one of $z_j$, $w_i$ or $u$ is a conformal coordinate. 
We will show that $u$ is an intrinsic coordinate for the entire sum. We have $$\frac{X_u}{t_u} = \frac{\sum_{j=1}^{m_2}(X_j)_u}{\sum_{j=1}^{m_2}(t_j)_u}.$$
Since $u$ and each $z_j$ are related by a conformal automorphism, using (\ref{Yovers}) for each $j$,
$$\frac{(X_j)_u}{(t_j)_u} = \frac{(X_j)_{z_j}}{(t_j)_{z_j}}  = -\frac{i}{\pi}\theta^2.$$
We conclude that
$\frac{X_u}{t_u} =-\frac{i}{\pi}\theta^2,$
and a similar argument gives  $\frac{Y_u}{s_u} =\frac{i}{\pi}\theta^2.$
Thus by Proposition \ref{prop:isothermal} $u$ is an intrinsic coordinate
(except for the homeomorphism requirement, which we verify below), and
$$\kappa(u) := \sqrt{\det H_\sigma} = \frac{\theta^2}{\pi}.$$
We readily see that the function $\kappa^{1/2}(u)$ is a harmonic function: the model thus has trivial potential.

\old{
The hessian matrix for any of these \td{Explain...}
is $\theta^2/\pi$ times the identity matrix.  Upon averaging, we find that $\sigma(s,t)$ has Hessian 
which is $\theta^2/\pi$ times the identity matrix. Thus the Hessian determinant of $\sigma(s,t)$
is again $\theta^4/\pi^2$: the model
has trivial potential with $u$ being a conformal coordinate.}

The parametrization of the slope variables $(s,t)$ in terms of $u \in \H$ are given by $s=\frac1{m_1} \sum_i s_i$, $t=\frac1{m_2} \sum_j t_j$. Explicitly,
\begin{align}
\label{eq:slopesfromu}
s(u)= \frac{1}{m_1 \theta} \sum_i \arg \frac{u}{u+\alpha_i^2}, \quad t(u)=\frac{1}{m_2 \theta} \sum_j  \arg (1+\beta_j^2 u)  \quad \text{(small $r$ case)},\\
s(u)=\frac{1}{m_1 \theta}\sum_i \left( \pi-\arg \frac{u}{u+\alpha_i^2} \right), \quad t(u)=\frac{1}{m_2 \theta}\sum_j  \left( \pi-\arg (1+\beta_j^2 u) \right) \quad \text{(large $r$ case)}. \nonumber
\end{align}

In both cases the map $u\mapsto(s,t)$ is bijective from $\H$ to $\mathring{\N}'$. To see this,
we note that as functions of $u$, $s_u/t_u=\gamma$ where $\gamma$ has negative imaginary part 
(see Proposition \ref{prop:isothermal}). This implies that the Jacobian determinant of $u\mapsto(s,t)$ is (strictly) negative. 
Thus the map is open except for critical points.
However we claim that the boundary values of $(s,t)$
wind once around $\mathring{\N}'$ as $u$ runs over the boundary of $\H$. To see this, suppose first that we are in the large $r$ case and 
(without loss of generality) $\alpha_i,\beta_j>1$ for all $i,j$, and after reindexing $\alpha_1\le\dots\le\alpha_{m_1}$ and $\beta_1\le\dots\le \beta_{m_2}$. Consider $s,t$ as $u$ runs over the boundary of a large semidisk: the region between the line
$y=\eps$ for small $\eps$ and the disk of radius $R\gg 1$ centered at $0$. 
When $u=u_1+i\eps$, as $u_1$ increases from $-R$, $s(u)$ starts close to $1$, decreases from $1$ to near $0$ on the range $[-\alpha_{m_1}^2,-\alpha_1^2]$, then increases to near $\frac12$ as $u_1$ passes $0$,
and remains close to $\frac12$ for $u_1>0$. Then when $u=Re^{i\theta}$ for large $R$, as $\theta$ runs from $0$ to $\pi$,
$s(u)$ increases from $1/2$ to $1$.
Likewise $t(u)$ is close to $0$ for $u_1<-1/\beta_{1}^2$, increases from $0$ to near $1$ on the range 
$[-1/\beta_{1}^2,-1/\beta_{m_2}^2]$, then decreases to near $\frac12$ as $u_1$ passes $0$,
and remains close to $\frac12$ for $u_1>0$. When $u=Re^{i\theta}$ for large $R$, as $\theta$ runs from $0$ to $\pi$,
$t(u)$ decreases from $1/2$ to $0$. Thus as $\eps\to0$ and $R\to\infty$ the image of $s,t$ is the whole triangle $\N$.
Similarly for $r<1$ we can assume $\alpha_i,\beta_j<1$. Then $s(u_1+\eps i)$ is small at $u_1\ll0$, increases to $1$ on the range
$[-\alpha_{m_1}^2,-\alpha_1^2]$, and then decreases to $0$ on the range $u\in(0,\infty)$.
Also $t(u_1+\eps i)$ is close to $1$ for $u_1\ll0$, decreases to $1$ on the range
$[-1/\beta_{1}^2,-1/\beta_{m_2}^2]$, and then increases to $0$ on the range $u\in(0,\infty)$.
The behavior of $s(u),t(u)$ on the range $(0,\infty)$ is described in the next section.

These calculations prove the claim and imply that there are no critical points, and the map $u\mapsto(s,t)$ is an (orientation-reversing) homeomorphism.

\subsection{Phase diagram}

The phase diagram of the system is displayed in Figure \ref{5vtx2X2amoeba} in terms of the magnetic fields $(X,Y)$ (for $(m_1,m_2)=(2,2)$). There is an amoeba-shaped disordered region where the free energy $F_{\vec\alpha,\vec\beta}(X,Y)$ is strictly convex surrounded by frozen phases in each of which the free energy is affine. We use the terminology ``amoeba'' for the disordered phase in a loose sense; it should be noted that these are \emph{not} amoebae in the algebraic sense.

\begin{figure}[htbp]
\begin{center}
\includegraphics[width=2.7in]{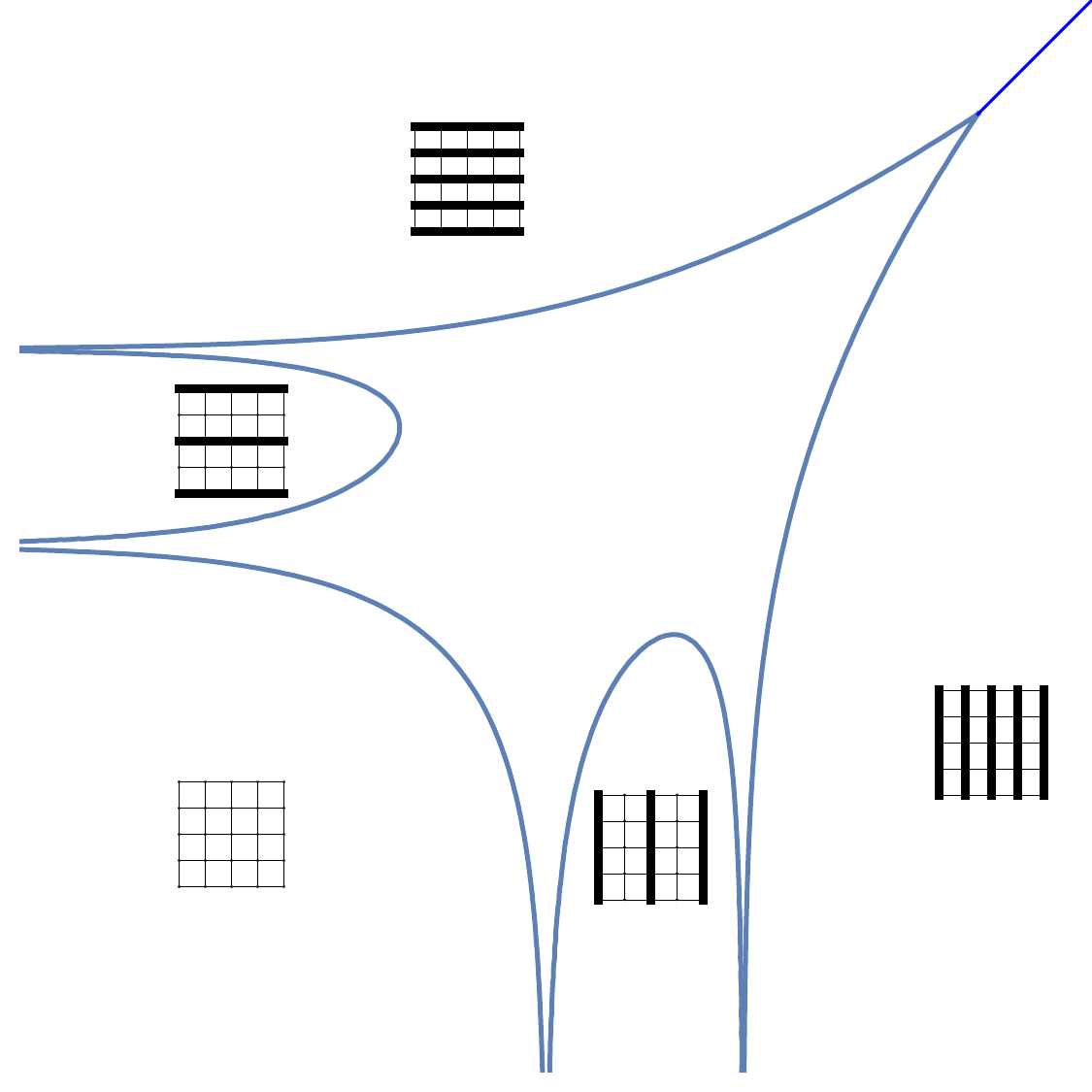} \hspace{2em}
\includegraphics[width=3.in]{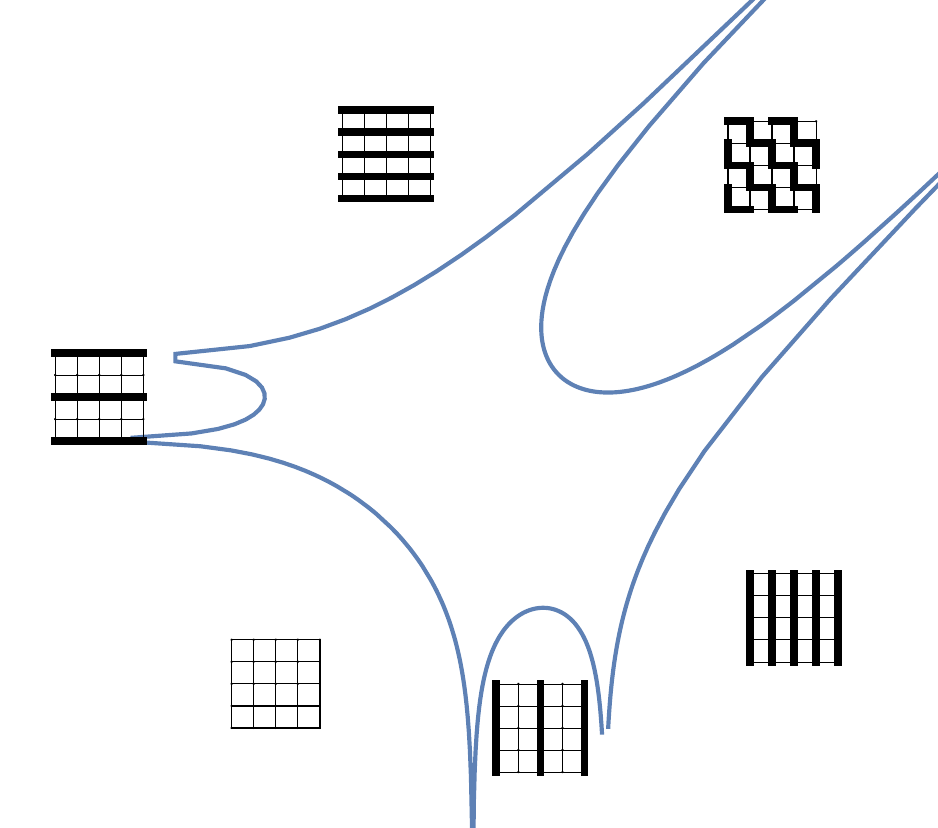}
\end{center}
\caption{\label{5vtx2X2amoeba}Amoeba and external phases for $2\times2$ case; small $r$ (left), large $r$ (right). The external phases, which occur in the complementary components of the amoeba,
correspond to ``frozen'' or ``semi-frozen" configurations.}
\end{figure}

In the exterior of the amoeba we claim that the corresponding Gibbs measure concentrates 
on a single periodic configuration, depending on the exterior component as shown in the figure. This is clear for the large external regions,
corresponding to slopes $(0,0),(1,0),$ and $(0,1)$, where there is a unique configuration with that slope. The remaining external regions, which are bounded between two parallel ``tentacles'' of the amoeba, behave like the semi-frozen states of \cite{KOS}. For the region corresponding to slope $(0,\frac12)$ on the left panel, for example, the configuration must consist of only horizontal edges, of total density $1/2$; however the horizontal paths at even $y$-value and at odd $y$-value have different weights. The higher-weight horizontal path has exponentially larger weight than the other (as a function of system of size) and so in the limit we only see
the higher-weight horizontal paths. Likewise for the other semi-frozen regions, including the zig-zag semi-frozen state of slope $(\frac12,\frac12)$ 
on the right panel.

A plot of the free energy is shown in Figure \ref{F2X2smallr} for both small $r$ and for the large $r$ case.

\begin{figure}[htbp]
\begin{center}\includegraphics[width=3in]{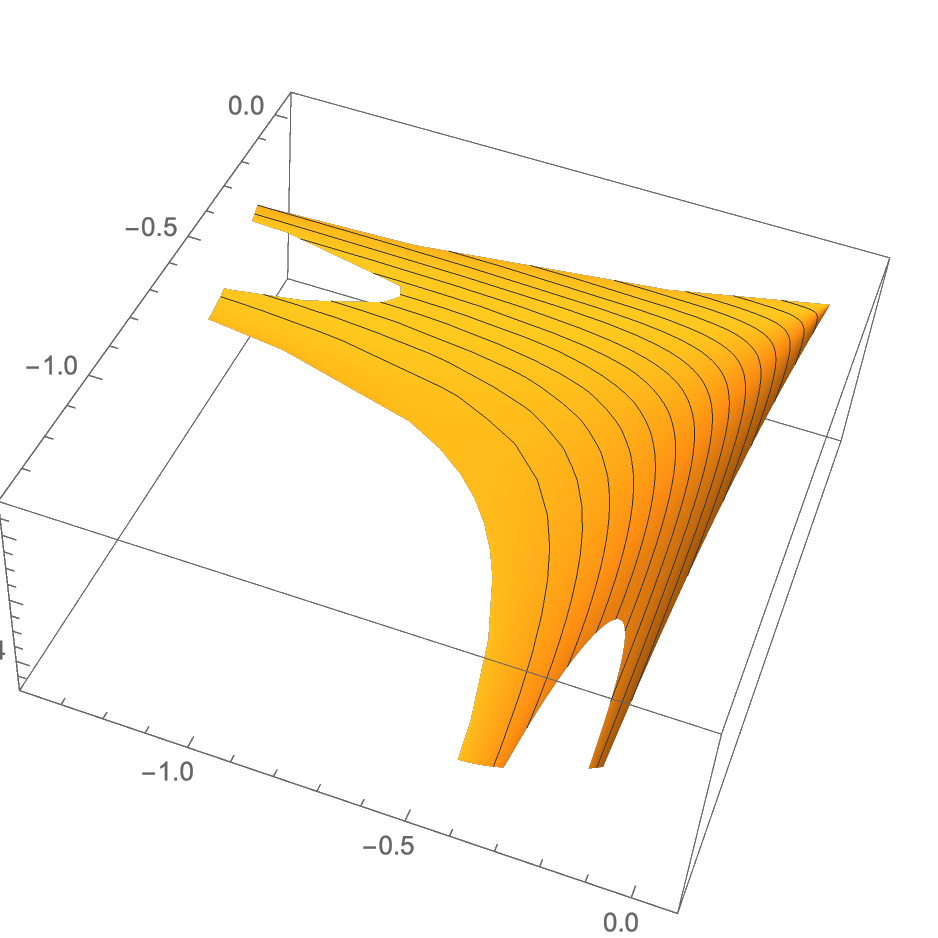}
\includegraphics[width=3.4in]{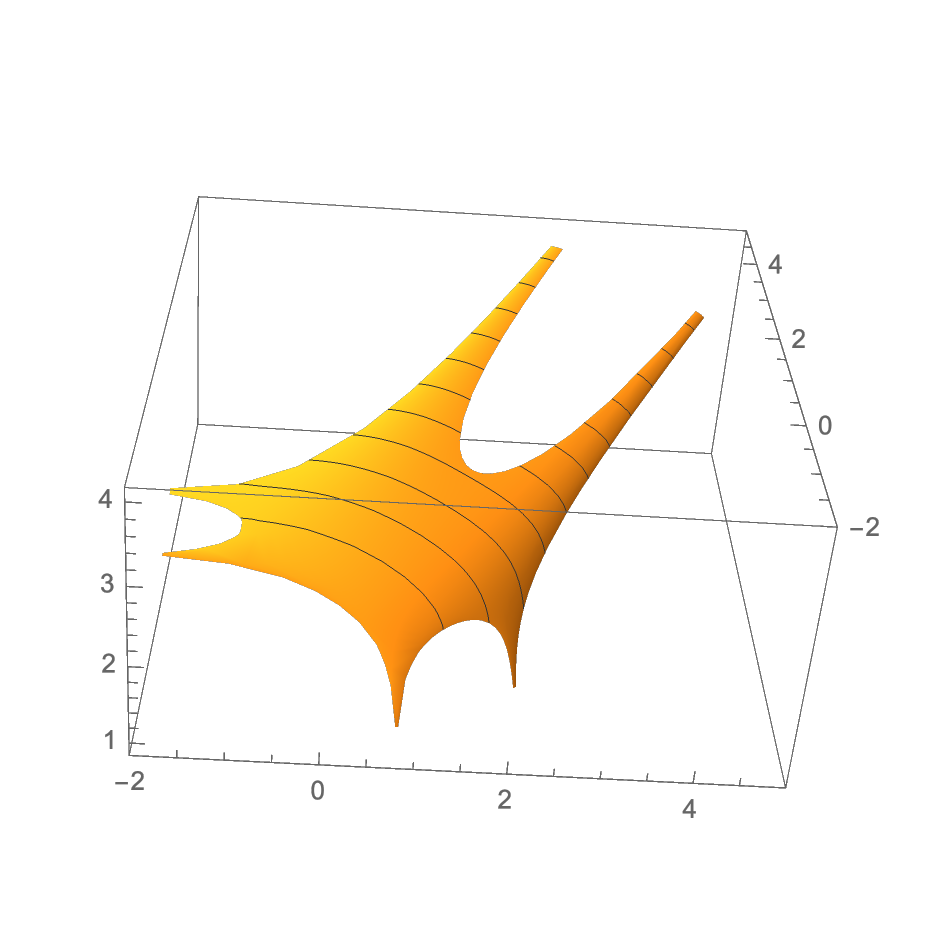}
\end{center}
\caption{\label{F2X2smallr}Free energy examples for $m_1=m_2=2$; small $r$ (left), large $r$ (right).}
\end{figure}

\subsection{Coexistence phase boundary}

In the small $r$ case, the surface tension is not strictly convex everywhere in $\N$. There is a 
``coexistence'' phase, $\N\setminus\bar\N'$, which is the complement of the closure
of the region on which $\sigma$ is strictly convex. The coexistence phase is bounded by the line $s+t=1$ and a convex curve, which we find here. 
The boundary of $\N'$ along this curve occurs when $\theta\to0$. 
In this case, we have (using (\ref{eq:slopesfromu}) with $u=Re^{i\theta}$, and $R\in(0,\infty)$ fixed) 
$$\lim_{\theta\to0}s=
\frac1{m_1}\sum_i \frac{\alpha_i^2}{R+\alpha_i^2}$$
and
$$\lim_{\theta\to0}t =\frac1{m_2}\sum_j \frac{\beta_j^2R}{1+\beta_j^2R}.$$

As $R\in(0,\infty)$ the curve $(s,t)$ describes a convex curve in $\N$ connecting $(1,0)$ to $(0,1)$,
see Figure \ref{neutral2X2} for an example. Thus $\N'$ is given explicitly as the region in the quadrant $\{ (s,t) \colon s\geqslant 0, t \geqslant 0\}$ below this curve.

\begin{figure}[htbp]
\centerline{\includegraphics[width=2.in]{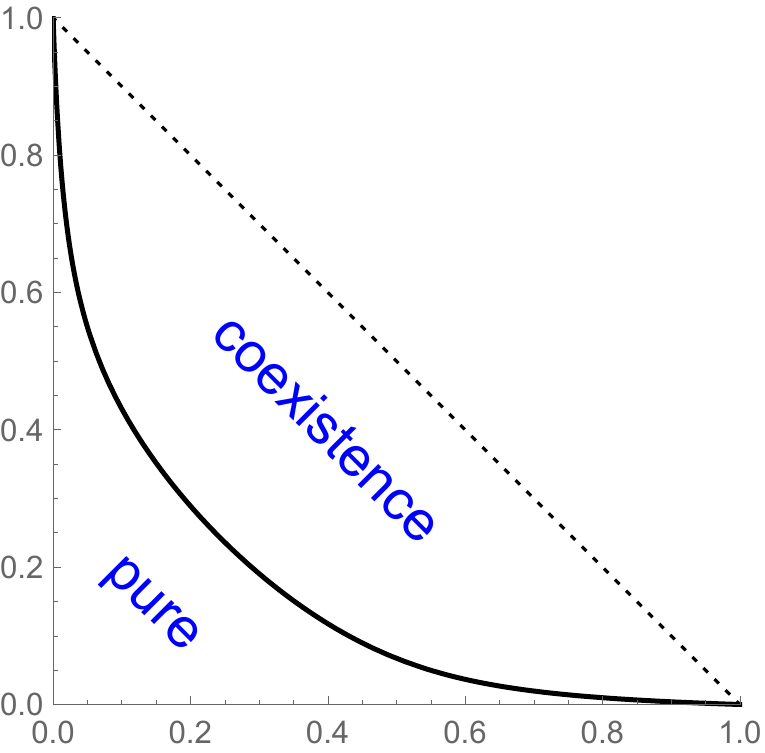}}
\caption{\label{neutral2X2}Coexistence phase boundary for $m_1=m_2=2$ and $\{\alpha_1,\alpha_2\}=\{\beta_1,\beta_2\}= \{.2,.9\}$.}
\end{figure}

\subsection{Surface tension}

We collect our main results about the structure of the surface tension in the following theorem.

\begin{theorem}
\label{thm:surfacetension}
a) [Small $r$ case] Consider a genus-zero five vertex model with an $m_1 \times m_2$ fundamental domain in the small $r$ case. The surface tension $\sigma$ is a convex function in $\N$; it is piecewise linear on $\D\N$ with slope discontinuities located at a subset of the points $(\frac{i}{m_1},0)$, $i=1,\ldots m_1-1$ and $(0,\frac{j}{m_2})$, $j=1,\ldots m_2-1$. It is strictly convex in $\mathring{\N'}$ where it has trivial potential,
\[ \sqrt{\det H_\sigma}=\frac{1}{\pi} (\arg u)^2,
\]
with a conformal parameter $u \in \H$ and $\sigma_{\N \setminus \bar \N'} \equiv 0$,
where $\N'$ is defined in the previous section.
The explicit expression for $\sigma$ is given by \eqref{doublesumsig}.

b) [Large $r$ case] Consider a genus-zero five vertex model with an $m_1 \times m_2$ fundamental domain in the large $r$ case. The surface tension $\sigma$ is a convex function in $\N$; it is piecewise linear on $\D \N$ with slope discontinuities located at a subset of the points $(\frac{i}{m_1},0)$, $i=1,\ldots m_1-1$ and $(0,\frac{j}{m_2})$, $j=1,\ldots m_2-1$, as well as at $(\frac12,\frac12)$. It is strictly convex in $\mathring{\N}$ and has trivial potential,
\[ \sqrt{\det H_\sigma}=\frac{1}{\pi} (2\pi -\arg u)^2,
\]
with a conformal parameter $u \in \H$.
The explicit expression for $\sigma$ is given by \eqref{sigmabigrgeneral}.

\end{theorem}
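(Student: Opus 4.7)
My plan is to assemble the pieces already established in the preceding subsections and then treat the remaining structural claims separately on the interior, the coexistence region, and the boundary $\partial\N$. Most ingredients are already in place. The explicit formulas \eqref{doublesumsig} and \eqref{sigmabigrgeneral} were obtained by applying Lemma \ref{Tcomm} twice, to vertical and then to horizontal transfer matrices, which reduces the periodic case to averages of simply periodic contributions indexed by $(i,j)$. Section \ref{se:Hessiandeterminant} showed that $u\in\H$ is an intrinsic coordinate for $\sigma_{\vec\alpha,\vec\beta}$ with $\sqrt{\det H_\sigma}=\theta^2/\pi$, and that $\kappa^{1/2}$ is harmonic in $u$; this is precisely the trivial-potential property asserted in the theorem. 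Strict convexity on $\mathring{\N'}$ (resp.\ $\mathring{\N}$) follows from positivity of $\det H_\sigma$ combined with the bijection $u\mapsto(s,t)$ verified at the end of that section. Global convexity on $\N$ is automatic since $\sigma$ is the Legendre dual of the convex function $F$.

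For the claim $\sigma\equiv 0$ on $\N\setminus\bar\N'$ in the small-$r$ case I would proceed via convexity rather than via the explicit formula. The closed coexistence region is bounded by two arcs: the segment $s+t=1$ and the coexistence curve described in the preceding subsection. On the segment $s+t=1$ the fully-packed constraint forces a frozen family of vanishing entropy, so $\sigma=0$ there. On the coexistence curve, $\sigma\to 0$ from within $\mathring{\N'}$: substituting $u=Re^{i\theta}$ with $\theta\to 0^+$ and $R\in(0,\infty)$ fixed, one checks that the relations \eqref{zwfromu} send $w_i,z_j$ to points in $(0,1)$ and make each argument of $\B$ in \eqref{doublesumsig} real and positive, so every $\B$-term vanishes in the limit. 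Since $\sigma\geq 0$ is convex and vanishes on both bounding arcs, it vanishes identically on the entire coexistence region.

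The boundary analysis on $\partial\N$ I would carry out edge by edge by identifying the dominant semi-frozen configurations. On $t=0$ a slope-$s$ configuration chooses $sm_1$ columns per horizontal period to carry vertical paths; the maximum-weight choice selects the columns of largest $\alpha_i^2$, producing a weight piecewise-linear in $s$ with breaks at $s=i/m_1$ whenever consecutive sorted $\alpha_i$'s differ. The analogous argument on $s=0$ yields breaks at $t=j/m_2$. On the segment $s+t=1$ in the large-$r$ case there is in addition a genuine zigzag configuration at $(1/2,1/2)$ whose per-vertex weight generically differs from those at the two endpoints, producing the extra slope discontinuity at $(1/2,1/2)$. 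Compatibility with the interior formula is then verified by taking boundary limits $u\to\partial\H$ in \eqref{sigmabigrgeneral} and \eqref{eq:slopesfromu} and matching them to the combinatorially computed piecewise-linear profile.

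The main obstacle I expect is the boundary computation on $s+t=1$ in the large-$r$ case, specifically isolating the zigzag state at $(1/2,1/2)$ as a distinguished optimum among several competing semi-frozen configurations and verifying that the limit of \eqref{sigmabigrgeneral} along the arc of $\partial\H$ mapping onto $s+t=1$ indeed produces a piecewise-linear profile with a genuine break exactly at $(1/2,1/2)$. The remaining boundary edges, the trivial-potential assertion, the formulas for $\sqrt{\det H_\sigma}$, and the coexistence-region vanishing are essentially immediate from the material already in place.
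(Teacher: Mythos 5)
Your overall strategy---assembling the results of the preceding subsections (the transfer-matrix commutation giving the explicit formulas \eqref{doublesumsig} and \eqref{sigmabigrgeneral}, the intrinsic-coordinate computation giving trivial potential and $\sqrt{\det H_\sigma}=\theta^2/\pi$, the bijectivity of $u\mapsto(s,t)$ giving strict convexity, and the semi-frozen analysis on $\partial\N$)---is exactly how the paper treats this theorem: it is stated as a summary of Section 3 and given no separate proof there. Your boundary analysis via dominant column/row choices and the identification of the zigzag state at $(\tfrac12,\tfrac12)$ likewise matches the paper's phase-diagram discussion, where the slope discontinuities on $\partial\N$ correspond to the two-dimensional complementary components of the amoeba in the $(X,Y)$-plane.

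The one step that does not go through as written is the deduction of $\sigma\equiv0$ on $\N\setminus\bar\N'$. Convexity of $\sigma$ together with its vanishing on the two bounding arcs only yields the upper bound $\sigma\le0$ on the region between them (every interior point lies on a chord with both endpoints in the zero set); it gives no lower bound, and the complementary inequality $\sigma\ge0$ is asserted without justification. It is not automatic in this paper's normalization: the empty-vertex weight $|1-r_v^2|$ shifts $\sigma$ by an affine term relative to \cite{dGKW}, and a microcanonical partition function restricted to a slope can a priori grow exponentially in the area even when all vertex weights are at most one. What is actually needed is that the coexistence region is swept out by subdifferentials of $F$ at its points of non-differentiability, so that $\sigma$ is affine along each such set; combined with your (correct) computation that $\sigma\to0$ on the curved, hence non-collinear, coexistence curve, this forces $\sigma\equiv0$. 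Relatedly, your claim of ``vanishing entropy'' for fully packed configurations on $s+t=1$ is plausible but unproved; rigidity of fully packed monotone path configurations at a fixed slope is precisely the kind of statement that requires an argument, all the more so because in the large-$r$ case that same segment carries a genuine slope discontinuity at $(\tfrac12,\tfrac12)$.
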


\section{Limit shapes}
\label{limitshapesection}

\subsection{Variational principle}
The limit shape problem is defined as follows.
Let $U\subset\R^2$ a bounded domain with piecewise smooth boundary and let $h_0:\partial U\to\R$ be a continuous function.
For $\eps>0$ small let $U_\eps = \eps\Z^2\cap U$. Let $\Omega(U_\eps)$ be the space of MNLP configurations on $U_\eps$ whose rescaled boundary height function $\eps H|_{\D U_{\eps}}$ approximates $h_0$ uniformly in the limit $\eps\to0$. 
We suppose $\Omega(U_\eps)$ is nonempty. 
On $U_\eps$ put $(m_1,m_2)$-periodic vertex weights $r_{ij}=\alpha_i\beta_j$,
and let $\mu_\eps$ be the associated probability measure on $\Omega(U_\eps)$. 
The limit shape problem asks about the typical height of a $\mu_\eps$-random configuration. Under appropriate hypotheses, the rescaled height functions concentrate onto a deterministic surface, called the limit shape.
Such a limit shape is determined by the surface tension $\sigma$ associated to this $(m_1,m_2)$-periodic model, where $\sigma$ is given by Theorem \ref{thm:surfacetension}. 

In this context the variational principle of \cite{CKP} takes the following form (see \cite[Corollary 4.11]{LT} and also \cite{dGKW}).
\begin{theorem}[\cite{CKP,LT}]\label{varthm}
Consider the variational problem of minimizing the surface tension integral
\begin{equation}
\label{eq:variationalproblem}
\min_h \iint_{U} \sigma(\nabla h)\,dx\,dy ~~~~~h|_{\partial U} = h_0,
\end{equation}
among all Lipschitz competitors $h$ with $\nabla h \in \N$ a.e.
For any $\delta>0$, with probability tending to $1$ as $\eps\to0$ a $\mu_\eps$-random configuration will have rescaled height function $\eps H$ lying uniformly within $\delta$ of 
one of the minimizers of \eqref{eq:variationalproblem}.
\end{theorem}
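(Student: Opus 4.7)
The plan is to follow the Cohn--Kenyon--Propp variational framework, combined with the Laslier--Toninelli extension to hard-constrained non-free-fermionic models, applied here using the surface tension computed in Theorem~\ref{thm:surfacetension}. The argument splits into an analytic part (existence and compactness of minimizers) and a probabilistic part (concentration of the random height function around them).

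First I would handle the variational problem. Admissible height functions are equi-Lipschitz because $\nabla h \in \N$ and $\N$ is bounded, and they share the boundary values $h_0$, so by Arzel\`a--Ascoli they form a compact subset of $C^0(\overline{U})$ (nonempty by the hypothesis on $\Omega(U_\eps)$). By Theorem~\ref{thm:surfacetension}, $\sigma$ is continuous and convex on $\N$, hence the functional $h \mapsto \iint_U \sigma(\nabla h)\,dx\,dy$ is weakly lower semicontinuous on $W^{1,\infty}(U)$ by standard convex-integral theory. Therefore the minimum $I_{\min}$ is attained and the set $\mathcal{M}$ of minimizers is compact in $C^0(\overline{U})$.

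Next I would prove concentration. Fix $\delta>0$ and let $A_\delta$ be the event that $\eps H$ has $C^0$-distance at least $\delta$ from $\mathcal{M}$. The CKP strategy discretizes $U$ into a mesoscopic grid of boxes of side $\ell$ with $\eps\ll\ell\ll 1$; a given admissible profile $h$ has nearly constant slope on each box, and the microcanonical partition function of the genus-zero five-vertex model at that slope on a torus of side $\ell/\eps$ equals $\exp(\ell^2\eps^{-2}[-\sigma(\nabla h)+o(1)])$, by the free-energy computation carried out via commuting transfer matrices (Lemma~\ref{Tcomm}) and the explicit surface tension of Section~\ref{se:Hessiandeterminant}. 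Multiplying over boxes and normalizing by the partition function of the boundary-value problem gives
\[
\mu_\eps\bigl(\{\eps H \text{ is within } \eta \text{ of } h\}\bigr) \approx \exp\!\left(-\eps^{-2}\left[\iint_U \sigma(\nabla h)\,dx\,dy - I_{\min}\right]\right),
\]
to leading order. A union bound over a finite $(\delta/3)$-net of admissible profiles in $C^0$ then forces $\mu_\eps(A_\delta) \to 0$, using that the cost in the exponent is strictly positive and bounded away from zero on $A_\delta$ by lower semicontinuity and compactness.

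The main technical obstacle arises when the slope lies in the non-strictly-convex region $\N\setminus \bar\N'$ in the small-$r$ case, or on the coexistence phase boundary, where distinct local configurations contribute at comparable exponential rates and uniqueness of the ergodic Gibbs measure is delicate. Aggarwal's uniqueness theorem on the coexistence boundary, combined with the vanishing $\sigma\equiv 0$ on $\N\setminus \bar\N'$ (so every competitor with gradient in that region has the same integral cost), is what closes the LDP bounds: concentration is enforced on the height function itself even when the local slope is not pinned by the functional. A secondary issue is the patching of configurations across mesoscopic block boundaries; as in Laslier--Toninelli this is handled by local surgery that modifies $H$ on an $o(\eps^{-2})$-measure set, contributing negligibly to the surface tension integral.
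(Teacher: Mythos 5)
The paper does not prove this statement at all: it is quoted from the literature, specifically \cite{CKP} and Corollary 4.11 of \cite{LT}, and the paper's only contribution here is to check that the genus-zero five-vertex model fits the framework (a well-defined surface tension, computed in Theorem \ref{thm:surfacetension}, convex but not necessarily strictly convex). Your proposal is therefore a reconstruction of the general CKP/Laslier--Toninelli argument rather than anything specific to this paper, and at the level of a sketch it has the right architecture: Arzel\`a--Ascoli compactness plus lower semicontinuity of the convex integrand for existence of minimizers, then a mesoscopic block decomposition, microcanonical partition-function asymptotics per block, and a union bound over a finite net to get concentration on the \emph{set} of minimizers. That last point is exactly the form of concentration the theorem (and the paper's remark after it) claims, and it follows from compactness and lower semicontinuity alone: any profile at $C^0$-distance $\ge\delta$ from all minimizers has energy bounded away from $I_{\min}$, so no strict convexity is needed.

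Two caveats. First, your appeal to Aggarwal's uniqueness theorem to ``close the LDP bounds'' in the coexistence region is misplaced: uniqueness of the ergodic Gibbs measure at a given slope plays no role in the concentration statement, which only requires the large-deviation \emph{upper} bound together with the fact that the block-wise exponential rate is governed by the convexified surface tension (automatic, since mixing slopes at mesoscopic scale realizes the convex hull; here $\sigma\equiv 0$ on $\N\setminus\bar\N'$). Aggarwal's result is relevant to the finer question of which Gibbs measure one sees locally, not to the height-profile LDP. Second, the step you dispatch in one sentence --- patching configurations across block boundaries while respecting the hard monotonicity and disjointness constraints of MNLP configurations, and matching the upper and lower bounds on the entropy of configurations near a given profile --- is precisely the technical core of \cite{LT} for non-determinantal constrained models; it is not a routine ``local surgery on an $o(\eps^{-2})$ set'' and is the reason the paper cites \cite{LT} rather than reproving the principle. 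As a sketch of why the cited theorem applies, your proposal is serviceable once the Aggarwal detour is removed; as a self-contained proof it is not complete.
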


\begin{rmk}
Usually variational principles as above require strict convexity of the surface tension to ensure the required concentration estimates. Corollary 4.11 of \cite{LT} that we use here works without the assumption of strict convexity -- but in this case one can only deduce a weaker form of concentration: the random surface concentrates on the \emph{set} of minimizers. (Another common assumption, which is \emph{stochastic monotonicity}, implies strict convexity of the surface tension \cite{LT}. Therefore in the small $r$ case the model is not stochastically monotone. In the large $r$ case one can show that the model is monotone but we don't use this fact since we establish strict convexity of surface tension by other means.)

In the large $r$ case, $\sigma$ is strictly convex in $\mathring{\N}$ and there is a unique minimizer in \eqref{eq:variationalproblem}, see Proposition 4.5 in \cite{DSS}; a limit shape forms in the entire region $U$. 

In the small $r$ case, however, $\sigma$ fails to be strictly convex and there need not be a unique minimizer.
However, since the surface tension is convex the set of minimizers forms a convex set and, moreover all minimizers
coincide on the \emph{repulsive region}  $U' \subset U$ where (for any minimizer) $\nabla h \in \mathcal{N'}$ -- the pure phase of the surface tension.
The complement of the repulsive region is a region of \emph{non-uniqueness}: the minimizer has non-unique extensions into this region.
The variational principle ensures the formation of a (deterministic) limit shape only in the subregion $U'$. A priori, it could be that by resampling the random surface it will concentrate around \emph{another} minimizer and thus causing macroscopic fluctuations on the non-uniqueness region.
Indeed, we conjecture that this is the case: there is no limit shape on the non-uniqueness region, the surface remaining random in the limit of small lattice spacing.
\end{rmk}

\begin{rmk}
While we don't consider the $r_v\equiv1$ case explicitly, this can be recovered as a limiting case. The model in this case, however, is determinantal and equivalent to a dimer model: a staggered lozenge tiling model. The surface tension has trivial potential, with $\sqrt{\det H_\sigma} \equiv \pi$ in $\mathring{\N}$ by \cite{KOS}. The results of \cite{KO1,ADPZ} apply for the limit shapes which arise.
\end{rmk}

\subsection{Darboux integrability}

A PDE is \emph{integrable in the sense of Darboux} if there is an
explicit parameterization of solutions in terms of complex analytic functions. 
Concretely, this means given a PDE $F(u,u_{x_i},u_{x_ix_j},\dots)=0$ for a function $u=u(x_1,\dots,x_k),$ 
where $F$ depends on finitely
many partial derivatives of $u$, the PDE has all solutions of the form 
$u=G(f,f',\int\!f,\dots)$ for a fixed function $G$ depending on a finite number of derivatives and antiderivatives of
an \emph{arbitrary} complex-analytic function $f$  (or possibly several such functions). 
A classical example is the minimal surface equation and its Weierstrass-Enneper parameterization of solutions
\cite{Spivak4}.

Let $h$ be one of the minimizers of the variational problem \eqref{eq:variationalproblem}. We say that $(x_0,y_0) \in U$ is in the {\em liquid region} $\mathcal{L}$ if $h$ is  $C^1$ in a neighbourhood of $(x_0,y_0)$ and $\nabla h(x_0,y_0)$ is in $\mathring{\N'}$. (Recall that $\N'=\N$ in the large $r$ case.)
Since $\sigma$ is smooth and strictly convex on $\N'$, by \cite{Morrey}, $h$  is smooth and satisfies the Euler-Lagrange equation in the liquid region
\begin{equation}
\label{eq:EL} 
\mbox{div}(\nabla \sigma \circ \nabla h)=0 \quad \text{in $\mathcal{L}$}.
\end{equation}
The liquid part of the limit shape is the surface $\{ (x,y,h(x,y)) \colon (x,y) \in \mathcal{L} \} \subset \R^3$. 

Since by Theorem \ref{thm:surfacetension} $\sigma$ has trivial potential, we can use Theorem 4.1 of \cite{KP1} to give parametrizations of limit shapes in terms of harmonic functions. 
The tangent plane to the limit shape at $(x_0,y_0,h(x_0,y_0))$ for a point $(x_0,y_0) \in \mathcal{L}$ is the plane 
\begin{equation}
\label{eq:tangentplane}
 P_{(x_0,y_0)}=\{ x_3= \nabla h(x_0,y_0) \cdot (x,y) + h(x_0,y_0) - \nabla h(x_0,y_0) \cdot (x_0,y_0) \},
\end{equation}
where we use coordinates $(x,y,x_3)$ in $\R^3$.

\begin{theorem}\label{tangentthm}
As $(x_0,y_0) \in \mathcal{L}$ varies over a component of the liquid region the tangent planes in \eqref{eq:tangentplane} can be parametrized by a complex coordinate
$\zeta$:
\[ \nabla h(x_0,y_0)=(s(u),t(u)), \quad\quad\quad  h(x_0,y_0) - \nabla h(x_0,y_0) \cdot (x_0,y_0)=G(\zeta)/\theta,
\]
where $u=u(\zeta)$ is complex-analytic, $s(u)$ and $t(u)$ are given by \eqref{eq:slopesfromu},
$G(\zeta)$ is harmonic and 
$\theta(u)=\begin{cases}\arg(u)&\text{small $r$ case}\\
2\pi-\arg(u)&\text{large $r$ case}\end{cases}$.
\end{theorem}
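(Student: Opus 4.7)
The plan is to invoke Theorem 4.1 of \cite{KP1} as a black box; the work reduces to verifying its hypotheses in our setting and translating its conclusion into the notation of the present paper. The two key structural hypotheses have already been established in Section~\ref{se:Hessiandeterminant}: $u$ is an intrinsic coordinate on $\mathring{\N'}$ in the sense of Proposition~\ref{prop:isothermal}, the map $u \mapsto (s(u),t(u))$ is an orientation-reversing homeomorphism $\H \to \mathring{\N'}$, and $\sigma$ has trivial potential with $\sqrt{\det H_\sigma} = \theta^2/\pi$. In particular $\kappa^{1/2}(u) = \theta/\sqrt{\pi}$ is a (positive) harmonic function of $u$.

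The first step is a regularity input: by Morrey's theorem \cite{Morrey}, since $\sigma$ is real-analytic and strictly convex on $\mathring{\N'}$, any variational minimizer $h$ is real-analytic on the liquid region $\mathcal{L}$ (defined as the open set where $\nabla h$ lies in $\mathring{\N'}$ and $h$ is $C^1$), and satisfies the Euler--Lagrange equation \eqref{eq:EL} there. Composing $\nabla h:\mathcal{L}\to \mathring{\N'}$ with the inverse of the homeomorphism from $\H$ yields a smooth map $u:\mathcal{L}\to\H$, which already gives the slope part of the parametrization, $\nabla h(x_0,y_0) = (s(u),t(u))$.

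The core analytic content of Theorem~4.1 of \cite{KP1} is that, in the presence of trivial potential, the Euler--Lagrange equation \eqref{eq:EL} rewritten in the intrinsic coordinate $u$ becomes a linear Beltrami-type system; its solutions are characterized by the existence, on each component of $\mathcal{L}$, of a local complex coordinate $\zeta$ in which $u$ is holomorphic, together with the identity
\[
\bigl(h(x_0,y_0) - \nabla h(x_0,y_0)\cdot (x_0,y_0)\bigr)\,\kappa^{1/2}(u) \;=\; \widetilde G(\zeta),
\]
for some function $\widetilde G$ harmonic in $\zeta$. Substituting $\kappa^{1/2}(u) = \theta/\sqrt{\pi}$ and absorbing the constant $\sqrt{\pi}$ into the harmonic function then yields precisely the asserted formula $h - \nabla h\cdot(x,y) = G(\zeta)/\theta$.

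The step I expect to require the most care is verifying that the parametrization extends consistently across each entire connected component of $\mathcal{L}$. This requires that $u:\mathcal{L}\to\H$ have no critical points, which follows from strict convexity of $\sigma$ on $\mathring{\N'}$ together with the homeomorphism statement proved at the end of Section~\ref{se:Hessiandeterminant}. In the small-$r$ case one must additionally restrict attention strictly to $\mathcal{L}$, since across $\partial\N'$ the surface tension degenerates and the minimizer of \eqref{eq:variationalproblem} need not be unique; but inside $\mathcal{L}$ the hypotheses of \cite{KP1} hold verbatim and the theorem applies componentwise.
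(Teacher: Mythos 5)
Your proposal is correct and follows essentially the same route as the paper: define $u$ on a component of $\mathcal{L}$ via the bijection $u\mapsto(s(u),t(u))$ composed with $\nabla h$, pass to the intrinsic complex coordinate $\zeta$ in which $u$ is holomorphic, and apply Theorem 4.1 of \cite{KP1} using the trivial potential property $(\det H_\sigma)^{1/4}=\pi^{-1/2}\theta$ from Theorem \ref{thm:surfacetension}. The only caveat is your final claim that $u$ has no critical points: this is neither needed for the statement (holomorphy of $u(\zeta)$ suffices) nor implied by strict convexity of $\sigma$ --- indeed the paper explicitly allows critical points of $u=u(\zeta)$ and instead imposes the condition $G_\zeta=0$ there when solving the Dirichlet problem.
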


\begin{proof}
Let $\mathcal{L}_0$ be a component of the liquid region. For $(x,y) \in \mathcal{L}_0$, $\nabla h(x,y)\in \mathring{\N'}$.  Since $u\mapsto(s(u),t(u))$ is a bijection $\H\to \mathring{\N'}$ (see paragraph following (\ref{eq:slopesfromu})), we can define the map $u \colon \mathcal{L}_0 \to \H$ by 
\[ \nabla h(x,y)=(s(u),t(u)).\]
The component $\mathcal{L}_0$ has intrinsic complex structure $\zeta$ (see equation (10) of \cite{KP1}), and 
the map $u=u(\zeta)$ is holomorphic. 

According to Theorem \ref{thm:surfacetension}, $\sigma$ has trivial potential with $(\det H_\sigma)^{1/4}=\pi^{-1/2} \theta$. This implies, by Theorem 4.1 of \cite{KP1}, that
\[ G(\zeta):=\theta(h(x,y) - \nabla h(x,y) \cdot (x,y)), \quad (x,y) \in \mathcal{L}_0
\]
is a harmonic function.
\end{proof}

Theorem 4.1 of \cite{KP1} also implies that $s \theta, t \theta$ and $\theta$ are harmonic functions of $\zeta$. 
As a consequence we arrive at the following equation.
\begin{corollary} \label{cor:darboux}
In any component $\mathcal{L}_0$ of $\mathcal{L}$, we have the equation
\begin{equation}
\label{eq:darboux}
(s \theta)_\zeta x + (t \theta)_\zeta y+G_\zeta-\theta_\zeta h(x,y)=0.
\end{equation}
Since $s \theta$, $t \theta$, $G$ and $\theta$ are all harmonic functions of $\zeta$, thus the complex $\zeta$-derivatives appearing in \eqref{eq:darboux} are all holomorphic functions of $\zeta$.
\end{corollary}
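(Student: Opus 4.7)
The plan is to unwind the tangent plane parametrization from Theorem \ref{tangentthm} and differentiate. That theorem gives two identities valid on any component $\mathcal{L}_0$ of the liquid region: the slopes $h_x = s(u)$, $h_y = t(u)$ and the intercept identity
\[
h(x,y) - s(u)\,x - t(u)\,y \;=\; \frac{G(\zeta)}{\theta(u)},
\]
which I would multiply through by $\theta$ to obtain the cleaner form
\[
\theta\, h(x,y) \;=\; (s\theta)\,x + (t\theta)\,y + G(\zeta).
\]
All of $s$, $t$, $G$, $\theta$ are viewed as functions of $\zeta$ via the holomorphic map $u=u(\zeta)$, while $x$, $y$, $h$ are also functions of $\zeta$ through the inverse of $\nabla h$.

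Next I would apply $\partial_\zeta$ to both sides. On the left I get $\theta_\zeta h + \theta\, h_\zeta$, and the key point is that the chain rule together with $\nabla h = (s,t)$ gives
\[
h_\zeta \;=\; h_x\, x_\zeta + h_y\, y_\zeta \;=\; s\, x_\zeta + t\, y_\zeta,
\]
so $\theta\, h_\zeta = (s\theta)\, x_\zeta + (t\theta)\, y_\zeta$. Differentiating the right side produces $(s\theta)_\zeta x + (s\theta) x_\zeta + (t\theta)_\zeta y + (t\theta) y_\zeta + G_\zeta$, and the two terms involving $x_\zeta$, $y_\zeta$ cancel against $\theta h_\zeta$. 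Rearranging the surviving terms yields exactly equation \eqref{eq:darboux}.

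For the second assertion, harmonicity of $\theta$, $s\theta$, $t\theta$ as functions of $\zeta$ is already furnished by Theorem 4.1 of \cite{KP1} combined with the trivial potential identity $(\det H_\sigma)^{1/4} = \pi^{-1/2}\theta$ established in Theorem \ref{thm:surfacetension}; harmonicity of $G$ is the content of Theorem \ref{tangentthm}. The holomorphicity of $(s\theta)_\zeta$, $(t\theta)_\zeta$, $G_\zeta$, $\theta_\zeta$ is then the elementary observation that $\partial_{\bar\zeta}\partial_\zeta = \tfrac{1}{4}\Delta$, so $\partial_\zeta$ of any real harmonic function is holomorphic in $\zeta$.

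I do not anticipate a genuine obstacle: the only thing to be careful about is the bookkeeping of which quantities depend on $\zeta$ directly (namely $s\theta$, $t\theta$, $\theta$, $G$) versus through $(x,y)$ (namely $h$ itself), and the cancellation of the $x_\zeta, y_\zeta$ terms, which is precisely what makes equation \eqref{eq:darboux} a first-order \emph{linear} relation among the holomorphic data. This cancellation is the Darboux-integrability mechanism, and once observed, the corollary is immediate.
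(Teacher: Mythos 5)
Your proposal is correct and is essentially the paper's own argument: both rest on the chain-rule identity $h_\zeta = s\,x_\zeta + t\,y_\zeta$ combined with the tangent-plane relation $h - sx - ty = G/\theta$, differing only in whether one multiplies through by $\theta$ before differentiating (your version) or after expanding the quotient-rule derivatives (the paper's version). The justification of harmonicity and hence holomorphicity of the $\zeta$-derivatives matches the paper's appeal to Theorem 4.1 of \cite{KP1} and Theorem \ref{tangentthm}.
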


\begin{proof}
Since $\nabla h=(s,t)$, we have
\[ h_\zeta=s x_\zeta+t y_\zeta=(sx+ty)_\zeta -s_\zeta x - t_\zeta y,
\]
or
\[ s_\zeta x+ t_\zeta y +(h-(sx+ty))_\zeta=0.
\]
Rewriting in terms of ratios we get
\be\label{Gderiv} ( s\theta/\theta)_\zeta x + (t\theta/\theta)_\zeta y+ (G/\theta)_\zeta=0.
\ee
When expanded this is
\[ \frac{1}{\theta} \left( (s \theta)_\zeta x + (t \theta)_\zeta y +G_\zeta - \theta_\zeta (sx+ty+G/\theta) \right)=0.
\] 
Finally, notice that 
\begin{equation}
\label{eq:tangent}
sx+ty+G/\theta=h.
\end{equation}
\end{proof}

This corollary gives us the following form of Darboux integrability.
Near a noncritical point of $u(\zeta)$ we can choose $u=\zeta$ as a local coordinate.
The real and imaginary part of (\ref{Gderiv}) give two simultaneous 
linear equations for $x,y$ as functions of the (known) functions $s(u),t(u),\theta(u)$ and the
(arbitrary) harmonic function $G(u)$.
Solving gives $(x,y)=(x(u),y(u))$ as functions of $u$ and then from \eqref{eq:tangent} (or (\ref{eq:darboux})) 
we have
our surface $(x,y,h(x,y))$ parameterized as a function of $u$.

Locally there is no restriction on $G$; however to find the limit shape for a Dirichlet problem of the type in Theorem \ref{varthm} (that is, when we want $h$ to be single-valued function of $(x,y)$), we need an extra condition on $G$, that $G_\zeta$ is zero at critical points of $u=u(\zeta)$; see \cite{KP3}.

\subsection{Large $r$ example}

In the examples below, we consider some unbounded domains $U$, whose limit shapes
are somewhat simpler than for the simplest bounded domains, because we can take $u(\zeta)=\zeta$. These are to be understood as limiting cases of exhaustion by bounded domains. We sketch the derivation of the solution (which is based on \cite{KP3}) and give the final formula for the associated function $G(u)$ and an associated figure. We verify that $G(u)$ defines the limit shapes for the problems under consideration.

Figure \ref{2by2largerthumb} shows an example limit shape of a \emph{semi-boxed plane partition},
for a $2\times 2$ fundamental domain with $(\alpha_1,\alpha_2) = (\beta_1,\beta_2)= (2,5/4)$.
The ``semi-boxed plane partition" is an MNLP configuration in an infinite region $\{x\ge0,y\ge0,|x-y|<n\}$, as in Figure \ref{2by2largerthumb}. It can be considered a limit of boxed plane partitions in an $n\times n\times m$ box when sidelength $m$ tends to infinity. 
The rescaled region is $$U=\{x>0, y>0, |x-y|<1\},$$ with 
boundary height function $h_0(x,y)=0$ along the $x$-axis and $y$-axis, $h_0(x,y)=y$ along the line $x-y=1$ and $h_0(x,y)=x$ along the line $x-y=-1$. The liquid region is bounded in this case, even though $U$ is unbounded. Most of the limit shape 
consists of an unbounded facet of slope $(1/2,1/2)$. The limit shape is found using Theorem \ref{tangentthm}. 
It is the envelope of the planes $P_u = \{x_3=sx+ty+c\}$,
where $s=s(u),t=t(u)$ are functions of the parameter $u\in\H$ given by (\ref{zwfromu}), and 
$c=G(u)/\theta$ where the harmonic function $G(u)$ is determined by its boundary values. We claim that for this boundary 
we have
$$G(u) = -\pi-\arg(u),$$
that is, $G(u)=-\pi$ for positive $u$ and $G(u)=0$ for negative $u$.
To see this, note that we expect six complementary regions to the limit shape, and the tangent planes in these regions
are respectively, starting clockwise from the region containing the origin, $x_3=0, x_3=\frac12x, x_3=x,x_3=\frac12x+\frac12y-\frac12,x_3=y,x_3=\frac12y$. As $u$ runs along $\R$, $s(u),t(u)$ and $c(u)$ are step functions which take values
$$(s,t,c)=\begin{cases}

(0,1,0)&u<-4\\
(0,1/2,0)&-4<u<-\frac{25}{16}\\
(0,0,0)&-\frac{25}{16}<u<\frac{16}{25}\\
(\frac12,0,0)&\frac{16}{25}<u<-\frac{1}{4}\\
(1,0,0)&-\frac{1}{4}<u<0\\
(\frac12,\frac12,-\frac12)& 0<u
\end{cases}.$$
Using (\ref{zwfromu}) and (\ref{XYstformulaslarge}) these correspond exactly to the desired boundary values of $(s,t,c)$ for the limit shape.
Therefore $G(u)$ defines the correct tangent planes on the limit shape along the boundary.
The fact that $G(u)$ is harmonic implies that the envelope of these planes for $u\in \H$ solves the Euler-Lagrange equation in the non-faceted region $\mathcal{L}$ specified below.
This construction thus defines the correct limit shape in the sense that the defined height function $h$ solves the Euler-Lagrange equation in $\mathcal{L}$, and extended linearly on the six complimentary facets matches up with the required boundary values $h_0$. In our example one can verify that this is indeed the unique minimizer with a similar argument to \cite[Section 8]{ADPZ} as follows.

The boundary of $\mathcal{L}$ is an envelope of lines parametrized by $u \in \mathbb{R} \cup \{\infty\}$.
\begin{equation}
\label{eq:tangentlines}
  s_u x +t_u y+ (G/\theta)_u=0,
\end{equation}
which defines $\partial \mathcal{L}$ as locally convex except at two cusp points and tangent to the four boundary lines of $U$. Each of the six components of $U \setminus \mathcal{L}$, corresponding to the different facets of the limit shape, can be covered by a foliation of tangent lines of \eqref{eq:tangentlines}. By construction
\[ \nabla h (x,y) = (s(u),t(u)) \quad \text{and } \quad \Phi(x,y):= \nabla \sigma ( \nabla h(x,y)) = (X(u),Y(u))
\]
for $(x,y) \in \mathcal{L}$. 
We now extend the vector field $\Phi$ continuously to the facets as constant following the foliations by tangent lines.
In terms of the $u$ variable, this extension defines a map $u \colon U \to \overline {\mathbb{H}}$ such that
\[ \frac{u_x}{u_y}=\frac{s_u}{t_u}=-\frac{Y_u}{X_u}.
\]
This means that $\mathrm{div}\, \Phi(x,y)=2 \Re (X_u u_x+ Y_u u_y)=0$ everywhere in $U$. Observe that by construction $\Phi(x,y) \in \partial \sigma (\nabla h(x,y))$ for every $(x,y) \in U$. Here the notation $\partial \sigma$ denotes the subgradient of the convex function $\sigma$.
The existence of a vector field $\Phi$ with these properties ensures that $h$ is the minimizer, see \cite[Proposition 8.1]{ADPZ}. We note that Proposition 8.1 is stated for dimer-specific surface tensions $\sigma$ in \cite{ADPZ} but it applies equally well in our setting as only the convexity of $\sigma$ is essential in the proposition.

\begin{figure}[htbp] \label{fig:semiboxed}
\centerline{\includegraphics[width=2.5in]{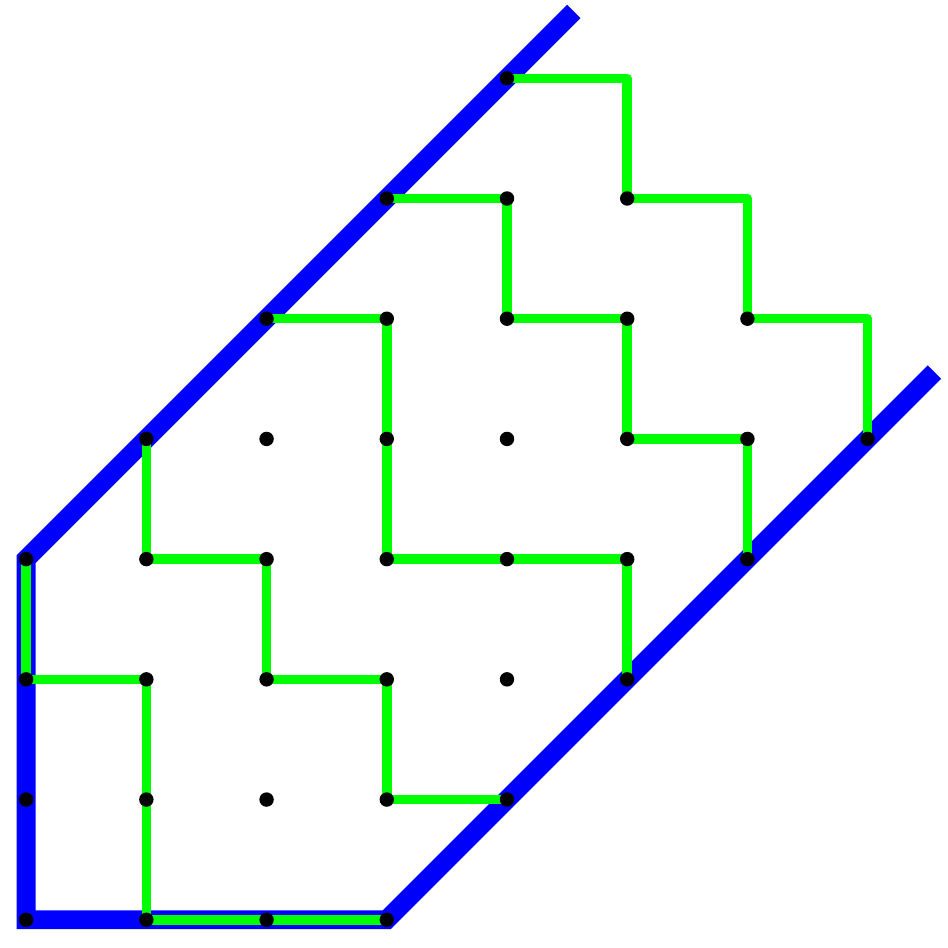}\hskip1in\includegraphics[width=2.5in]{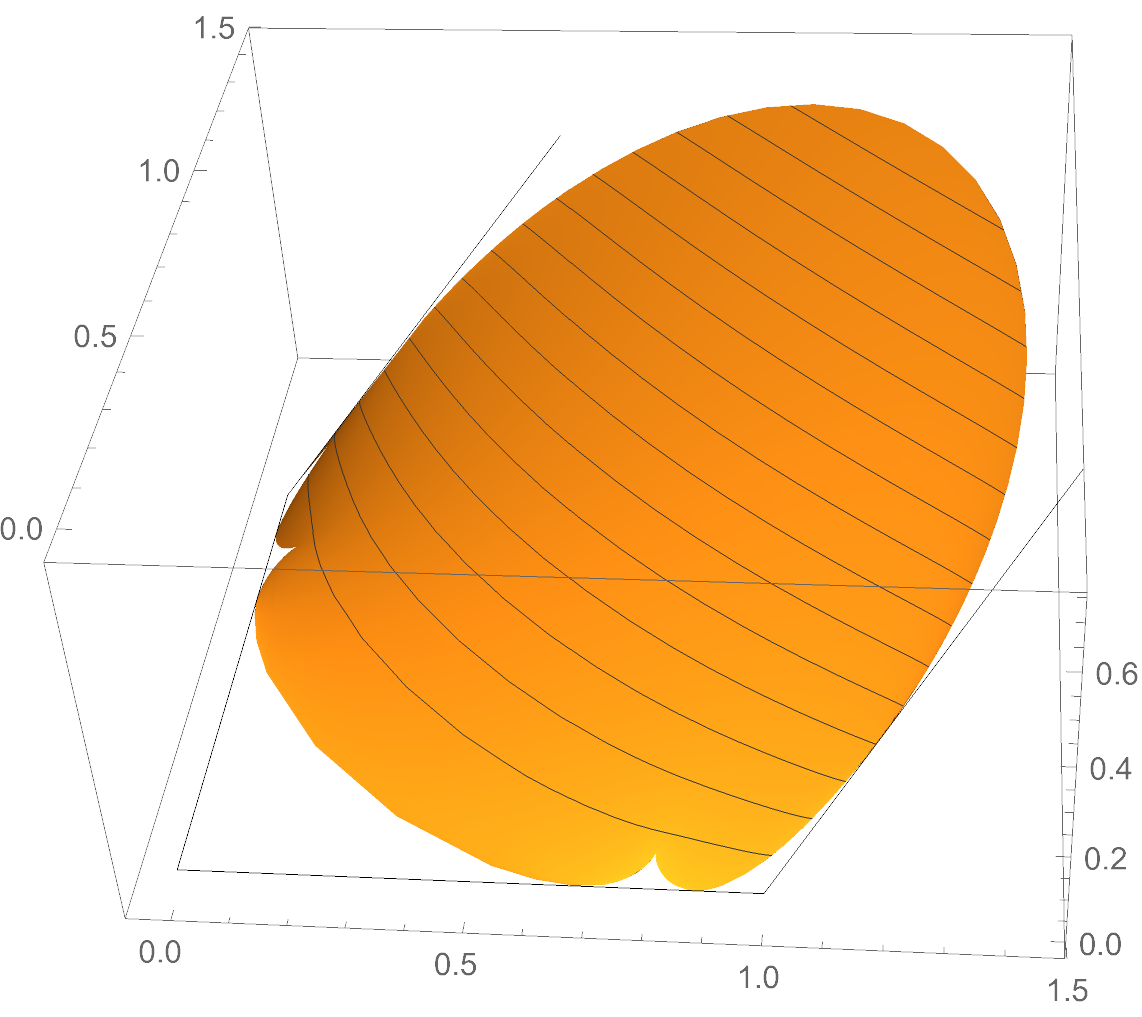}}
\caption{\label{2by2largerthumb} Left panel: the ``semi-boxed plane partition". For large $r$ our results show that a $5$-vertex configuration is eventually (as $x+y\to\infty$) a completely packed set of zig-zag paths. Right panel: Limit shape for the semi-boxed plane partition,
for a $2\times2$ fundamental domain, large $r$ case. The liquid part of the limit shape is plotted. The limit shape is linear on each component of the complement of this shape.
} 
\end{figure}

\begin{figure}[htbp]
\centerline{\includegraphics[width=3.in]{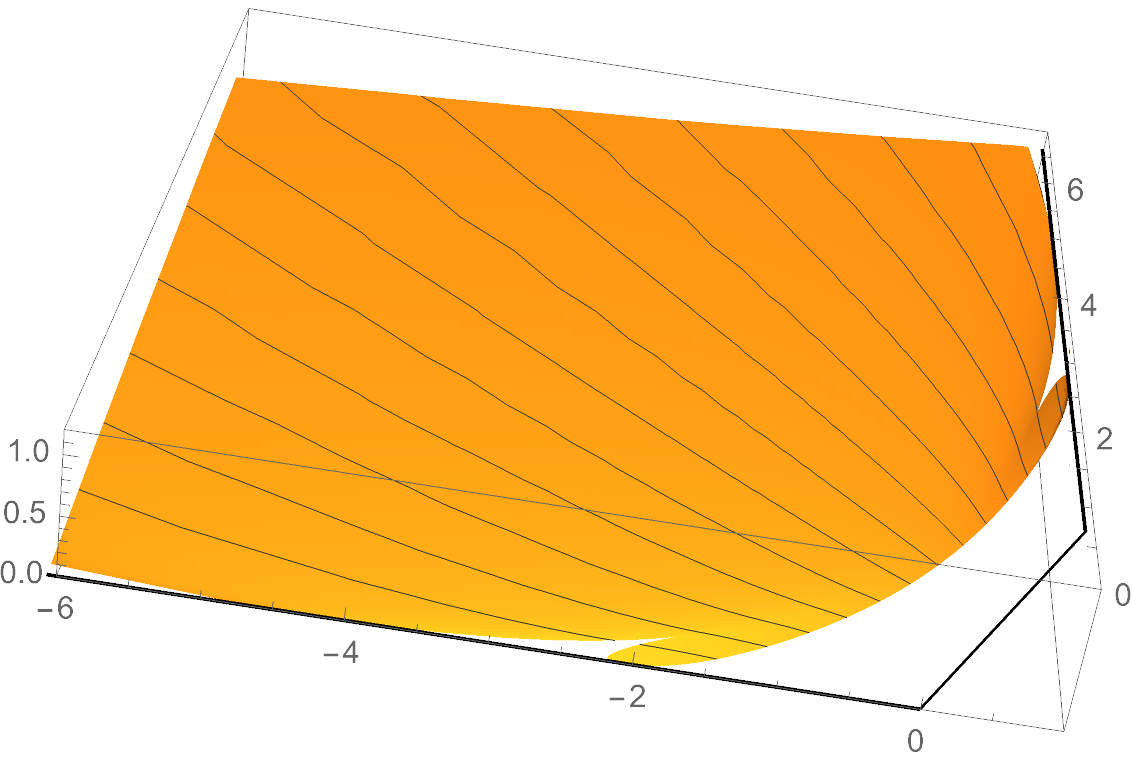}}
\caption{\label{semibpp2by2}One of the limit shapes for another type of semi-boxed plane partition boundary conditions,
for the five-vertex model in a $2\times2$ fundamental domain, small $r$ case. The liquid part of the limit shape is plotted.} 
\end{figure}

\subsection{Small $r$ example}

Figure \ref{semibpp2by2} shows an example limit shape of a different semi-boxed plane partition,
which is the limit of the MNLP configuration in an $n_1\times n_2\times n_3$ box when sidelengths $n_2,n_3$ tend to infinity. 
The rescaled region is the region $$U=\{x<1, y>0, x-y<1\},$$ 
with boundary height function $0$ along the $x$-axis, $1$ along the $y$ axis and $y$ along the diagonal edge. 
The limit shape shown is for a $2\times 2$ fundamental domain with $(\alpha_1,\alpha_2) = (\beta_1,\beta_2)= (4/5,1/4)$.
Interestingly, there is a one-parameter family of limit shapes in this region,
each of which has liquid region of roughly parabolic shape for large $x,y$. The direction of the
axis of this parabola is a free parameter.  
The relative rates at which $n_2,n_3$ go to infinity determine this parameter.
(The same phenomenon already appears for the lozenge model in which case the frozen boundary is an exact parabola.)

We can describe the limit shape as the envelope of the planes $P_u = \{x_3=sx+ty+c\}$,
where $s=s(u),t=t(u)$ are functions of the parameter $u$ given by (\ref{zwfromu}), and 
$c=G(u)/\theta$ where $G(u)$ is determined, up to a single parameter $a$, by the boundary values.
Here $u\in(0,\infty)$ on the coexistence boundary, and $u=-\alpha_1^2,-\alpha_2^2$ at the tangency
points with the line $x=1$, and $u=-\beta_1^{-2},-\beta_2^{-2}$ at the tangency points with the line $y=0$.
The value of $u$ at the point at infinity in the limit shape is defined to be $u=a$, where $a$ can range over
the interval $(-\alpha_1^2,-\beta_2^{-2})$. 
In this case for $u\in \R$ we have 
$$
G(u) = \begin{cases}\frac{\pi}2&u\in[-\alpha_2^2,-\alpha_1^2]\\
\pi&u\in[-\alpha_1^2,a]\\
0&\text{else.}\\
\end{cases}
$$
The harmonic extension to $u\in\H$ is then
$$G(u) = \arg(\frac{u-a}{\sqrt{(u+\alpha_1^2)(u+\alpha_2^2)}}).$$

Note that the limit shape shown does not touch the diagonal boundary edge. At the corners of the boundary there are two thin facets along the $x$- and $y$-axes with slopes $(1,0)$ and $(0,1)$ respectively (not depicted in the picture). The remaining part near the diagonal boundary edge is a region of non-uniqueness in terms of the variational problem:
see the discussion following Theorem \ref{varthm}.

\section{Open questions}\label{openq}

\begin{question}
In the coexistence region in the boxed-plane partition limit (for example) is the configuration random in the limit?
\end{question}

\begin{question}
Is there a natural ``higher genus'' model? That is, are there choices of staggered weights for the five-vertex model which produce ``gas'' phases (with multiply-connected amoebae) as in \cite{KOS} and still have trivial potential?
\end{question}

\begin{question}
Is there a duality between the small $r$ and large $r$ cases? In Theorem \ref{thm:surfacetension} the angles $\theta=\pi^{1/2} (\det H_\sigma)^{1/4}$ for large $r$ and small $r$ cases sum to $2\pi$. 
If we think of the model in terms of interacting lozenges, the small $r$ case corresponds to repulsive, while the large $r$ case corresponds to attractive interaction (and angle $\pi$ corresponds to no interaction). \end{question}

\bibliographystyle{hplain}
\bibliography{5vtx8}

\begin{thebibliography}{10}

\bibitem{Aggarwal}
Amol Aggarwal.
\newblock Nonexistence and uniqueness for pure states of ferroelectric
  six-vertex models, 2020, arxiv:2004.13272.

\bibitem{ADPZ}
Kari Astala, Erik Duse, Istv\'an Prause, and Xiao Zhong.
\newblock Dimer models and conformal structures, 2020, arXiv:2004.02599.

\bibitem{Baxter}
Rodney~J. Baxter.
\newblock {\em Exactly solved models in statistical mechanics}.
\newblock Academic Press, Inc. [Harcourt Brace Jovanovich, Publishers], London,
  1989.
\newblock Reprint of the 1982 original.

\bibitem{BGH}
Robert~L. Bryant, Phillip~A. Griffiths, and Lucas Hsu.
\newblock Hyperbolic exterior differential systems and their conservation laws.
  {II}.
\newblock {\em Selecta Math. (N.S.)}, 1(2):265--323, 1995.

\bibitem{CKP}
Henry Cohn, Richard Kenyon, and James Propp.
\newblock A variational principle for domino tilings.
\newblock {\em J. Amer. Math. Soc.}, 14(2):297--346, 2001.

\bibitem{dGKW}
Jan de~Gier, Richard Kenyon, and Samuel~S. Watson.
\newblock Limit shapes for the asymmetric five vertex model.
\newblock {\em Comm. Math. Phys.}, 385(2):793--836, 2021.

\bibitem{DSS}
Daniela De~Silva and Ovidiu Savin.
\newblock Minimizers of convex functionals arising in random surfaces.
\newblock {\em Duke Math. J.}, 151(3):487--532, 2010.

\bibitem{KO1}
Richard Kenyon and Andrei Okounkov.
\newblock Limit shapes and the complex burgers equation.
\newblock {\em Acta Mathematica}, 199(2):263--302, 2007.

\bibitem{KOS}
Richard Kenyon, Andrei Okounkov, and Scott Sheffield.
\newblock Dimers and amoebae.
\newblock {\em Annals of {M}athematics}, pages 1019--1056, 2006.

\bibitem{KP1}
Richard Kenyon and Istv\'an Prause.
\newblock Gradient variational problems in $\mathbb{R}^2$.
\newblock {\em Duke Math. J., to appear}, arXiv:2006.01219.

\bibitem{KP3}
Richard Kenyon and Istv\'an Prause.
\newblock {Limit shapes from envelopes}, in preparation.

\bibitem{LT}
Piet Lammers and Martin Tassy.
\newblock Macroscopic behavior of lipschitz random surfaces, 2020,
  arxiv:2004.15025.

\bibitem{lieb1967exact}
Elliott~H. Lieb.
\newblock Exact solution of the problem of the entropy of two-dimensional ice.
\newblock {\em Physical Review Letters}, 18(17):692, 1967.

\bibitem{Morrey}
Charles~B. Morrey, Jr.
\newblock {\em Multiple integrals in the calculus of variations}.
\newblock Classics in Mathematics. Springer-Verlag, Berlin, 2008.
\newblock Reprint of the 1966 edition.

\bibitem{NohKim}
Jae~Dong Noh and Doochul Kim.
\newblock Interacting domain walls and the five-vertex model.
\newblock {\em Phys. Rev. E}, 49:1943--1961, Mar 1994.

\bibitem{nolden1992asymmetric}
I.~M. Nolden.
\newblock The asymmetric six-vertex model.
\newblock {\em Journal of {S}tatistical {P}hysics}, 67(1):155--201, 1992.

\bibitem{Pauling}
Linus Pauling.
\newblock The structure and entropy of ice and of other crystals with some
  randomness of atomic arrangement.
\newblock {\em Journal of the American Chemical Society}, 57:2680--2684, 1935.

\bibitem{Spivak4}
Michael Spivak.
\newblock {\em A comprehensive introduction to differential geometry. {V}ol.
  {IV}}.
\newblock Publish or Perish, Inc., Wilmington, Del., second edition, 1979.

\bibitem{zagier2007dilogarithm}
Don Zagier.
\newblock The dilogarithm function.
\newblock In {\em Frontiers in Number Theory, Physics, and Geometry II}, pages
  3--65. Springer, 2007.

\end{thebibliography}
\end{document}